\newcommand{\mH}{\mathcal{H}}
\numberwithin{equation}{section}
\newtheorem{theorem}{Theorem}[section]
\newtheorem{scholium}[theorem]{Scholium}
\newtheorem{lemma}[theorem]{Lemma}
\newtheorem{prop}[theorem]{Proposition}
\newtheorem*{rep@theorem}{\rep@title}
\newcommand{\newreptheorem}[2]{%
\newenvironment{rep#1}[1]{%
 \def\rep@title{#2 \ref{##1}}%
 \begin{rep@theorem}}%
 {\end{rep@theorem}}}
\begin{document}

\title[]{Simplifying $3$-manifolds in $\mathbb{R}^4$}

\author{Ian Agol}
\thanks{Ian Agol is supported by NSF DMS-1105738.}

\author{Michael Freedman}
\thanks{Michael Freedman is supported by Microsoft Research.}

\begin{abstract}
We show that a smooth embedding of a closed $3$-manifold in $S^3\times\mathbb{R}$ can be isotoped so that every generic level divides $S^3\times t$ into two handlebodies (i.e., is Heegaard) provided the original embedding has a unique local maximum with respect to the $\mathbb{R}$ coordinate.  This allows uniqueness of embeddings to be studied via the mapping class group of surfaces and the Schoenflies conjecture is considered in this light.  We also give a necessary and sufficient condition that a $3$-manifold connected summed with arbitrarily many copies of $S^1\times S^2$ embeds in $\mathbb{R}^4$.
\end{abstract}

\maketitle

\section{Introduction}
We work in the smooth category.  Some fundamental questions in geometric topology concern embeddings of $3$-manifolds in $\mathbb{R}^4$.  Several closed $3$-manifolds are known to embed in a homotopy sphere \cite{BudneyBurton} but are not known to embed in $S^4$, so \emph{existence} is related to the smooth 4D Poincar\'{e} conjecture.  The Schoenflies conjecture that every embedded $3$-sphere in $\mathbb{R}^4$ bounds a (smooth) ball is the most famous \emph{uniqueness} question.  This paper sets up some machinery which may be useful for uniqueness questions.  Our main result is a kind of normal form we call a ``Heegaard embedding.''

\begin{reptheorem}{Morse position}
Let $e:M^3\hookrightarrow S^3\times\mathbb{R}$ be a (smooth) embedding of a closed $3$-manifold which is generic in the sense that the composition $\pi\circ e: M^3\overset{e}\hookrightarrow S^3\times\mathbb{R} \xrightarrow{\pi} \mathbb{R}$ is a Morse function.  If $M$ has a unique local maximum, then $e$ is isotopic to an embedding $f: M^3\hookrightarrow S^3\times\mathbb{R}$ so that for all generic levels $t$, $f(M)\cap S^3_t$ is a Heegaard surface for $S^3_t$---that is, $f(M)\cap S^3_t$ cuts $S^3_t$ into two handlebodies.  We call an embedding with this property \emph{Heegaard}.
\end{reptheorem}

\noindent\emph{Remark.} Notice the asymmetry of the hypothesis: $e$ is permitted to have any number of local minima.  Of course, $S^3\times\mathbb{R}$ can be inverted by $t\rightarrow -t$, so having a single local minimum also implies an isotopy to Heegaard position. We also note that if $M^3\cong S^3$, and the Morse function induces a Heegaard splitting of $S^3$ of genus $\leq 3$, then Scharlemann has shown that the embedding is isotopic to a standard 3-sphere \cite{Scharlemann08}.

Section 2 discusses examples of manifolds which cannot have a unique local maximum in Morse position. Section 3 explores the uniqueness of stabilizations of manifolds. Section 4 gives the proof of Theorem \ref{Morse position} and Section 5 explores the uniqueness of the embeddings via the Goeritz group \cite{Goeritz:1933} of the ``middle level'' Heegaard surface.

{\bf Acknowledgement:} We thank Marty Scharlemann for helpful correspondence.

\section{No embedding with unique local maxima}
The main result of the paper discusses embeddings with a unique local maximum. This section is a counterpoint, to demonstrate that
there are embedded codimension-one manifolds for which any Morse embedding must have multiple local maxima.

First, we consider the 3-dimensional case.
\begin{prop}
Any surface in  Morse position in $\mathbb{R}^3$ with a unique local maximum must be a Heegaard surface of $S^3\supset \mathbb{R}^3$.
\end{prop}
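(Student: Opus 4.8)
The plan is to put $\Sigma\subset\mathbb{R}^3\subset S^3$ (with $S^3=\mathbb{R}^3\cup\{\infty\}$), let $z$ denote the height (the $\mathbb{R}$-coordinate), and show that each of the two components of $S^3\setminus\Sigma$ is a handlebody. First I would record two reductions. Every component of a closed surface attains a maximum of $z$, so the hypothesis of a \emph{unique} local maximum forces $\Sigma$ to be connected; and a connected closed (hence orientable, two-sided) surface separates $S^3$ into exactly two connected regions, namely the bounded region $A\subset\mathbb{R}^3$ and the region $B\ni\infty$. It therefore suffices to prove that $A$ and $B$ are each handlebodies.

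For $A$ the point is that $z|_A$ is a Morse function on the compact manifold-with-boundary $A$ whose critical points all lie on $\partial A=\Sigma$, since $z$ has no interior critical points. Building the sublevel sets $A_t=A\cap\{z\le t\}$ from the bottom, one checks from the local models that passing a critical point $p$ of $z|_\Sigma$ attaches to $A_t$ a handle of index equal to the Morse index of $z|_\Sigma$ at $p$ when $A$ is the upper region near $p$ (so that $\nabla z$ points into $A$), and attaches nothing essential when $A$ is the lower region. In particular a $2$-handle is attached only at a maximum of $z|_\Sigma$ at which $A$ is the upper region, and no $3$-handles ever appear because $z|_A$ has no interior critical points. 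The unique maximum of $\Sigma$ is the global top, above which lies the unbounded region $B$; thus $A$ is the lower region there, and $A$ acquires no $2$-handles at all. A connected compact $3$-manifold built from handles of index $0$ and $1$ only is a regular neighborhood of a graph, hence a handlebody, so $A$ is a handlebody.

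For $B$ the same bookkeeping must be run from the top, and this is where the argument becomes delicate, because $B$ is noncompact in $\mathbb{R}^3$ and the two ends $z\to\pm\infty$ are glued at the single point $\infty$. I would replace $z$ by a Morse function $h$ on $S^3$ agreeing with $z$ on a large ball containing $\Sigma$ and having $\infty$ as its unique maximum, at the cost of one auxiliary interior minimum low in $B$. Running the sublevel-set analysis for $h|_B$: the interior minimum gives a single $0$-handle, the boundary critical points on $\Sigma$ contribute handles exactly as above (so the unique maximum of $\Sigma$ now forces $B$ to acquire exactly one $2$-handle), and the maximum at $\infty$ gives a single $3$-handle. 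The crucial—and hardest—step is to verify that this lone $2$-handle and the $3$-handle at $\infty$ form a complementary pair: geometrically the $2$-handle caps the top meridian circle of $\Sigma$ from the $B$-side and its cocore runs straight out to $\infty$, so it cancels the terminal ball. Equivalently, $B$ is the union of the handlebody $B\cap\{z\le c_{\max}-\epsilon\}$ with the ball $\{z\ge c_{\max}-\epsilon\}\cup\{\infty\}$ glued along a disk in their boundaries, which changes nothing up to homeomorphism. Granting the cancellation, $B$ too is assembled from $0$- and $1$-handles and is a handlebody.

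With both $A$ and $B$ handlebodies, $\Sigma$ cuts $S^3$ into two handlebodies and is a Heegaard surface. I expect the only genuine obstacle to be the region containing $\infty$: making the compactification precise and checking that the single top $2$-handle cancels the handle at $\infty$. An alternative to that last step is to argue instead that the unique maximum forces $A$ to be \emph{unknottedly} embedded—its spine meets generic levels in the pattern of bridge number one, so it can be isotoped to a standard spine—whence the complementary region $B$ is automatically a handlebody.
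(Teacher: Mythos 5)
Your proof is correct, and for the bounded region $A$ it is essentially the paper's argument: build sublevel sets and observe that at a critical point of $z|_\Sigma$ of index $i$ the region lying locally \emph{above} the surface receives an $i$-handle while the other side is unchanged, so the region that receives no $2$-handles (and no $3$-handles) is a handlebody. Where you genuinely diverge is the region containing $\infty$. The paper never confronts the compactification or any handle cancellation: it works in $S^3$ with a coordinate function from the outset (so both complementary regions are compact), and it disposes of the second region by an isotopy trick---sliding the unique local maximum of $\Sigma$ over the north pole of $S^3$. That slide preserves the unique-local-maximum hypothesis but exchanges which complementary region contains the pole, so the same one-sided ``no $2$-handles'' argument applies verbatim to the other region. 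You instead attack $B$ head-on: extend $z$ to a Morse function $h$ on $S^3$ with its maximum at $\infty$ and one auxiliary low minimum, accept that $B$ acquires exactly one $2$-handle (at the global top of $\Sigma$) and one $3$-handle (at $\infty$), and cancel the pair; your ``equivalently'' formulation is the right way to make this airtight, since $B\cap\{h\le c_{\max}-\epsilon\}$ is compact, connected, and built from $0$- and $1$-handles only, hence a handlebody, while the rest of $B$ is a ball glued on along a single disk (a boundary connected sum with a ball, which changes nothing). One imprecision to fix: the glued-on piece is not the ball $\{z\ge c_{\max}-\epsilon\}\cup\{\infty\}$ but its intersection with $B$, i.e.\ that ball minus the solid cap lying in $A$; this intersection is still a ball, and the gluing locus---the plane minus an open disk, compactified at $\infty$---is still a disk, so the argument goes through. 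The trade-off between the two routes: the paper's slide-over-the-pole trick gets the second handlebody for free (at the cost of the small unproved observation that the slide keeps the maximum unique), whereas your route needs the compactification bookkeeping and the $2$/$3$-handle cancellation but is completely explicit and never moves the surface. Your closing ``bridge number one spine'' alternative, by contrast, is vague as stated and should be dropped or proved; the disk-gluing argument you already gave is the one to keep.
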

\begin{proof}
Consider $\Sigma \subset S^3$ with a unique local maximum with respect to a coordinate function $e:S^3\to [-1,1]$ (and assume that $\Sigma$ does
not meet the north or south poles of $S^3$ with respect to this coordinate). We may build up the complementary regions
of $\Sigma$ by increasing the coordinate function. For small $\epsilon$ so that $e^{-1}([-1,-1+\epsilon])\cap \Sigma =\emptyset$, we see that $e^{-1}([-1,-1+\epsilon])$ is a 3-ball. As we go through a critical point of index $i$ of $\Sigma$, $0\leq i\leq 2$, one of the complementary regions of $\Sigma$ gets an $i$-handle attached, and the other remains unchanged. So one of the complementary regions of $\Sigma$ has a handle decomposition with no $2$-handles, which implies that it must be a handlebody. Sliding the maximum of $\Sigma$ over the north pole of $S^3$ exchanges the roles of the two complementary regions, so we see that both regions must be handlebodies, and thus $\Sigma$ is a Heegaard splitting of $S^3$.
\end{proof}
Thus, any knotted surface in $S^3$ must have multiple local maxima in any Morse embedding.

In four dimensions, a bit less is known.
A result of Scharlemann \cite{Scharlemann85} says that a $2$-sphere with one local minimum and two local maxima is unknotted.
But it is not known if a knotted $2$-sphere $S^2_k$ ($2$-knot) can have a unique local minimum (which immediately implies $\pi_1(\mathbb{R}^4\setminus S^2_k)\cong \mathbb{Z}$).
Even if one restricts to the general $2$-knot with $\pi_1(\mathbb{R}^4\setminus S^2_k) \not\cong \mathbb{Z}$,
we do not know an argument which shows that the boundary of the tubular neighborhood $(S^1\times S^2)_k\hookrightarrow\mathbb{R}^4$ must have multiple local minima with respect to, say, the fourth coordinate $\pi_4$ on $\mathbb{R}^4$.  However,

\begin{theorem}\label{thm:1.2}
If $(S^1\times S^2)_k\hookrightarrow\mathbb{R}^4$ is the boundary of any tubular neighborhood of a $2$-knot $k$ with $\operatorname{deficiency}(k)\leq 0$, then $(S^1\times S^2)_k$ must have more than one local maximum with respect to any coordinate $\pi_4$ on $\mathbb{R}^4$ (which is generic in the sense that $\pi_4|(S^1\times S^2)_k$ is a Morse function).
\end{theorem}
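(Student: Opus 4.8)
The plan is to argue by contradiction. Suppose $(S^1\times S^2)_k$ is in generic Morse position with respect to $\pi_4$ and has a \emph{single} local maximum; I will show that the knot group $G=\pi_1(\mathbb{R}^4\setminus k)$ then has a presentation of deficiency $1$, contradicting $\operatorname{deficiency}(k)=\operatorname{deficiency}(G)\le 0$. First I would fix notation and compactify. Write $M=(S^1\times S^2)_k=\partial N(k)$, where $N(k)\cong S^2\times D^2$ is the tubular neighborhood (its normal bundle is trivial, since $[k]=0$ in $H_2$). Compactify $\mathbb{R}^4$ to $S^4$ so that $\pi_4$ extends to a Morse function $h\colon S^4\to\mathbb{R}$ whose only critical points are a minimum $p_-$ and a maximum $p_+$, both of which lie in the unbounded complementary region of $M$. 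Then $M$ separates $S^4$ into the compact piece $A=N(k)$ and the complement $B=S^4\setminus\operatorname{int}N(k)$, with $B$ connected and $\pi_1(B)=G$.

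The central step is the sweep-out handle dictionary, the four-dimensional analogue of the statement used above for surfaces in $S^3$: since $h$ has no interior critical points between the poles, $h^{-1}(-\infty,t]$ is always a ball, and as $t$ increases past an index-$i$ critical point of $h|_M$ an $i$-handle is attached to whichever of $A$ or $B$ lies locally \emph{above} $M$ at that point (checked directly from the local quadratic model). I would then pin down the top-index handles. At the unique local maximum of $h|_M$ the region lying above $M$ is the unbounded region $B$---nothing of $M$ lies higher, so one escapes upward to infinity without crossing $M$---so $B$ receives exactly one $3$-handle and $A$ receives none; and the ambient maximum $p_+\in B$ contributes the unique $4$-handle. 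Thus $B$ has a handle decomposition with $\beta_3=\beta_4=1$, in which the $0$- and $1$-handles build a connected graph and the $2$-handles give relators, producing a presentation of $G$ with $\beta_1-\beta_0+1$ generators and $\beta_2$ relators.

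It remains to run the Euler-characteristic count. From $\chi(S^4)=\chi(A)+\chi(B)-\chi(M)$ with $\chi(M)=0$ and $\chi(A)=\chi(S^2)=2$ one gets $\chi(B)=0$, that is $\beta_0-\beta_1+\beta_2-\beta_3+\beta_4=0$; substituting $\beta_3=\beta_4=1$ yields $\beta_1-\beta_2=\beta_0$. Hence the presentation above has deficiency $(\beta_1-\beta_0+1)-\beta_2=1$ (equivalently, it has $\beta_2+1$ generators and $\beta_2$ relators), so $\operatorname{deficiency}(G)\ge 1$, contradicting $\operatorname{deficiency}(k)\le 0$. Since the argument only used genericity of $\pi_4$ and uniqueness of the maximum, it shows $M$ must have more than one local maximum for every such $\pi_4$.

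I expect the middle step to be the main obstacle: carefully verifying the index-$i$ handle-attachment dictionary in dimension four and, above all, confirming that \emph{both} the top $M$-handle and the ambient cap are absorbed by the complement $B$, so that exactly one $3$-handle and one $4$-handle occur there. Getting $\beta_3=\beta_4=1$ is precisely where the uniqueness of the maximum is consumed (extra maxima would inflate $\beta_3$ and destroy the count); once this is secured, the deficiency bound is immediate.
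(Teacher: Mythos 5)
Your proof is correct and follows essentially the same route as the paper's: compactify to $S^4$, give the knot complement the handle decomposition induced by the outer critical points of the Morse function, and play the identity $\chi(\text{complement})=0$ against the deficiency of the resulting presentation of $\pi_1$. The only real difference is cosmetic: the paper rotates $S^4$ so that the absolute maximum (and the ambient top handle) become \emph{inner}, leaving the complement with a $2$-dimensional spine and the count $e_1-e_2=1$, whereas you keep both poles in the unbounded region and let the single $3$-handle and single $4$-handle cancel in the Euler characteristic, arriving at the same deficiency-$1$ presentation $(\beta_1-\beta_0+1)-\beta_2=1$.
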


\begin{proof}
One point compactify $\mathbb{R}^4$ to $S^4$; sometimes $S^4$ will be more convenient to work with than $\mathbb{R}^4$.  By definition, $\operatorname{deficiency}(k) = \operatorname{deficiency}(\pi_1(S^4\setminus S^2_k)) = \operatorname{maximum}(g-r)$, where $g$ is the number of generators and $r$ the number of relations in a given presentation of $\pi_1(S^4\setminus S^2_k)$; the maximum is taken over finite presentations. We denote deficiency by $d$.

The critical points of $\pi_4|(S^2\times S^1)_k$ are of two types ``inner'' and ``outer'' according to which side of the embedding of $(S^2\times S^1)_k$ gains a handle.  We call the $2$-knot complement, the \emph{outside}.  Then the outside critical points of index $=k$, $k=0,1,2,3$, determine a 4D handle structure $\mH$ for the outside (together with a single $0$- and $4$-handle coming from $S^4$).  By a rotation of $S^4$ we may assume that the absolute maximum is an inside handle and so does not contribute to $\mH$.  Associated to $\mH$ is a cell complex and inside the $0$- and $1$-cells we identify and then collapse a maximal tree (as is conventional).  This results in a 2-dimensional cell complex with a single vertex, $e_1$ 1-cells, and $e_2$ 2-cells. But $\chi(S^4-\mathcal{N}(S^2_k))=\chi(S^4)-\chi(S^2)+\chi(S^1\times S^2)=0$, so this cell complex must have euler characteristic $0=1-e_1+e_2$. Thus, $1=e_1-e_2 \leq d\leq 0$, a contradiction.

Actually we have shown that there must be at least $1-d$ outer local maxima, corresponding to at least $1-d$ 3-cells in the resulting cell complex.
\end{proof}

Theorem \ref{thm:1.2} is useful as J.\ Levine \cite{Levine78} has constructed $2$-knots of arbitrarily large negative degeneracy.  His most basic example is a $2$-twist spun trefoil whose group is $\langle t, x\mid x^3=1, txt^{-1} = x^{-1}\rangle$.  This group has $d=0$ but proving this requires an insight: The group's Alexander module  $\Lambda / \langle 3, t+1\rangle$, $\Lambda = Z[t,t^{-1}]$ also admits a notion of deficiency: $d_A:= \operatorname{max}(\sharp\text{ gen} - \sharp\text{ relations})$ which upper bounds group deficiency $d_A + 1 \geq d$.  The advantage of passing to $d_A$ is that routine homological algebra can be used to compute $d_A = -1$, whereas group deficiency is generally more opaque.

C.\ Livingston \cite{Livingston05} extended Levine's construction to construct embeddings of contractible manifolds $W$ in $\mathbb{R}^4$ with closed complement $X$ having $d(\pi_1(X))$ arbitrarily small, and such that $\partial W$ is a homology 3-sphere.  By an argument similar to the proof of Theorem \ref{thm:1.2} we obtain:

\begin{theorem}\label{thm:1.3}
For every $N\geq 0$ there is an embedding $e$ of a homology $3$-sphere $\Sigma$ in $\mathbb{R}^4$ so that any embedding $f$ isotopic to $e$ must have at least $N$ local maxima.\qed
\end{theorem}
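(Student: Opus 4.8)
The plan is to run the deficiency-versus-handle-count argument of Theorem \ref{thm:1.2} on Livingston's examples, the only change being that the relevant complement is now a homology $4$-ball rather than an $(S^1\times S^2)$-complement. Fix $N\ge 0$. First I would invoke Livingston's construction \cite{Livingston05} to produce a contractible $W\hookrightarrow\mathbb{R}^4$ whose boundary $\Sigma=\partial W$ is a homology $3$-sphere and whose closed complement $X=S^4\setminus\operatorname{int}W$ (after one-point compactifying) satisfies $d:=d(\pi_1(X))\le -N$; this is available because Livingston makes $d(\pi_1(X))$ arbitrarily small. I take $e\colon\Sigma\hookrightarrow\mathbb{R}^4$ to be this boundary embedding. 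Since $W$, $X$, and hence $\pi_1(X)$ and its deficiency, depend only on the isotopy class of $e$, every $f$ isotopic to $e$ has the same complement $X$ and the same $d$; I may assume $\pi_4\circ f$ is Morse, perturbing by a small isotopy if necessary.

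Next I would compactify to $S^4$ and classify the critical points of $\pi_4|f(\Sigma)$ as inner or outer exactly as in the proof of Theorem \ref{thm:1.2}, so that the outer critical points of index $k$, together with the $0$- and $4$-handles coming from $S^4$, assemble into a handle decomposition $\mH$ of $X$. Passing to the associated cell complex and collapsing a maximal tree in the $1$-skeleton yields a complex homotopy equivalent to $X$ with a single vertex, $e_1$ $1$-cells, $e_2$ $2$-cells, $e_3$ $3$-cells, and $e_4$ $4$-cells, where $e_3$ is precisely the number of outer local maxima (the tree collapse touches only the $0$- and $1$-cells). The key numerical input is that $X$ is a homology $4$-ball: by Alexander duality (or by Mayer--Vietoris for $S^4=W\cup_\Sigma X$, using $\chi(W)=1$ and $\chi(\Sigma)=0$) one gets $\chi(X)=1$.

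I would then read off the inequality. The Euler characteristic of the complex gives
\[
1=\chi(X)=1-e_1+e_2-e_3+e_4,
\]
so $e_1-e_2=e_4-e_3$. The $3$- and $4$-cells do not affect $\pi_1$, so $\langle e_1\mid e_2\rangle$ presents $\pi_1(X)$ and therefore $d\ge e_1-e_2=e_4-e_3$. Since trivially $e_4\ge 0$, this rearranges to $e_3\ge e_4-d\ge -d\ge N$. Thus $f$ has at least $N$ outer local maxima, and as every outer local maximum is in particular a local maximum of $\pi_4\circ f$, the embedding $f$ has at least $N$ local maxima.

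The genuine content has all been imported: Livingston's realization of arbitrarily negative deficiency and the handle/inner--outer bookkeeping of Theorem \ref{thm:1.2}. The only point requiring care---and the place I would expect to have to argue most carefully---is the cell/handle correspondence for the manifold-with-boundary $X$: that the outer index-$3$ critical points contribute exactly the $3$-cells, that the collapse preserves the homotopy type, and that $e_4\ge 0$ is all one needs (so that one never has to compute the exact number of top handles). The resulting bound $e_3\ge -d$ is one weaker than the $1-d$ of Theorem \ref{thm:1.2}, reflecting the shift from $\chi(X)=0$ to $\chi(X)=1$, but this is immaterial since $d$ may be taken as negative as we please.
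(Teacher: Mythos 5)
Your proposal is correct and takes essentially the same approach as the paper: the paper's proof of Theorem \ref{thm:1.3} consists precisely of citing Livingston's examples and rerunning the inner/outer handle-counting argument of Theorem \ref{thm:1.2}, which is exactly what you have done, with the Euler characteristic input changed from $\chi=0$ to $\chi(X)=1$. The only cosmetic difference is bookkeeping: you retain the top $4$-handle and use $e_4\ge 0$, whereas the paper's Theorem \ref{thm:1.2} argument normalizes by a rotation of $S^4$ so that the absolute maximum is an inner critical point; both give $e_3\ge -d\ge N$.
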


We have seen that fundamental group deficiency can force local maxima.  What happens if there is no fundamental group at all, as in the Schoenflies problem which addresses (smooth) embeddings $e:S^3\hookrightarrow\mathbb{R}^4$?  Isotoping $e$ to remove all but one local maximum would imply that both closed sides of $e$ have $2$-complex spines---a sort of ``$\frac{1}{4}$--Schoenflies theorem'' as the goal is to show the two sides have $0$-dimensional spines.  The existence of an isotopy to an embedding with a unique local maximum is open.

In Section 6 of \cite{Gompf91}, Gompf produces a ``genus $=4$'' embedding $e:S^3\hookrightarrow\mathbb{R}^4$ with a unique local maximum and minimum and two inside (and two outside) handles each of indices $1$ and $2$.  The \emph{genus} refers to the genus of the surface obtained after the $0$- and $1$-handles and before the $2$- and $3$-handles.  Gompf proves that $e(S^3)$ bounds a $4$-ball (i.e., is isotopically standard); however, the $2$-spine given by the cores of the inside handles reads out the well-known Akbulut-Kirby \cite{AkbulutKirby85} presentation $p$ of the trivial group: $\{x, y \mid xyx = yxy, x^4 = y^5\}$.  Unless $p$ is Andrews-Curtis (AC) trivial---which most experts doubt---this $2$-spine cannot be deformed through $2$-complexes to a point.  The implication for the embedding $e$ is that any isotopy to the round sphere would necessarily pass through embeddings with multiple local maxima (assuming $p\not\equiv\emptyset$ (AC)).

Going back at least to Zeeman's conjecture \cite{Zeeman64} (still open) that a contractible $2$-complex cross interval collapses to a point without expansions, deformations of complexes have been recognized to be most subtle in dimension $2$.  The necessity of local maxima appearing during unknotting isotopies of $S^3$ in $\mathbb{R}^4$ is a manifold analogue of $2$-complex questions such as the AC and Zeeman's conjectures.

\section{Stable equivalence of embeddings}

However, the Schoenflies Conjecture (SC) itself has a fortunate stability which seems to have gone unnoticed.  The question of whether an embedding $e: S^3\hookrightarrow S^4$ is standard is unaffected by the stabilization procedure: ``trade 4D $i$-handles between two sides, for $i = 0$, $1$, $2$, $3$, or $4$.''  Specifically, the \emph{Schoenflies conjecture} states that any embedding $e: S^3\hookrightarrow S^4$ is standard in the equivalent senses:

\begin{enumerate}
    \item $e(S^3)$ bounds a ball to one side
    \item $e(S^3)$ bounds balls on both sides
    \item $e$ is isotopic to a standard position, which can be taken to be the identity map from the equator of $S^4$ to itself.
\end{enumerate}

The phrase ``trading a 4D $i$-handle'' will mean a change in the set-theoretic decomposition of $S^4$ into two pieces which we think of as the closed \emph{inside} and a closed \emph{outside}, respectively.  So if a closed $3$-manifold $M\subset S^4$ divides $S^4$ into $A\cup_M B$ and $h$ is an $i$-handle of $(A, M)$ (or $(B,M)$) it may be reassigned or \emph{traded} to $(B, M)$ (or $(A,M)$), leading to a new decomposition $S^4 = A^\prime\cup_{M^\prime} B^\prime$, where $B^\prime = B\cup h$ (or $A^\prime = A\cup h$).

Let us formalize the induced equivalence relation.  It is an exercise in transversality that any two codimension $1$ submanifolds of $S^4$ cobound a codimension $1$ submanifold $W^4$ of $S^4\times [0,1]$.  A (generic) projection $W \rightarrow [0,1]$ is Morse and induces a handlebody structure on $W$.

\noindent{\bf{Definition.}} Codimension $1$ spheres $S_i^3 \subset S^4$, $i = 0$, $1$, are \emph{stably equivalent} if there is a proper embedding $W\subset S^4\times [0,1]$, so that $\partial_0 W = S_0^3$, $\partial_1 W = -d(S^3_1)$ and so that the projection $W\rightarrow [0,1]$ is Morse but \emph{without} critical points of index $2$; $d$ is an arbitrary diffeomorph $d:S^4\rightarrow S^4$ and the sign denotes reversal of orientation.

\noindent\emph{Note.} We have thrown the diffeomorphism $d$ into the definition because Theorem \ref{thm:1.4} below addresses a diffeomorphism-invariant property of embedded $3$-spheres.  If it were known that $\pi_0(\operatorname{Diff}^+ S^4) = \{e\}$, $d$ would be unnecessary.  But as it is, without the $d$ we would not know that diffeomorphic embeddings were stably equivalent.

\begin{theorem}\label{thm:1.4}
The diffeomorphism types of the closed complementary regions $A$ and $B$ for an embedding $S^3\hookrightarrow S^4$, $S^4 = A\cup_{S^3} B$, depends \emph{only} on the stable equivalence class of the embedding.  In particular, stably equivalent embeddings are diffeomorphic.
\end{theorem}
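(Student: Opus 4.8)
\section*{Proof proposal for Theorem \ref{thm:1.4}}

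The plan is to analyze the witnessing cobordism $W$ one critical point at a time while tracking both complementary regions. Write $\mathcal{A},\mathcal{B}\subset S^4\times[0,1]$ for the closures of the two components of $(S^4\times[0,1])\setminus W$, so that $\mathcal{A}$ is a $5$-dimensional cobordism from $A_0=A$ to the $d$-image $A_1$ of the corresponding region of $S_1$, and likewise $\mathcal{B}$ from $B_0=B$ to $B_1$, with $\mathcal{A}\cup_W\mathcal{B}=S^4\times[0,1]$ and cross-sections $A_t\cup_{S_t}B_t=S^4$ at every level $t$. By the usual Morse/Cerf rearrangement I may assume the critical values of $p\colon W\to[0,1]$ are distinct and ordered by index, so it suffices to understand a single elementary cobordism. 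Once $A_0\cong A_1$ and $B_0\cong B_1$ are established compatibly on the common boundary $S^3$, gluing the two diffeomorphisms along $S^3$ and composing with $d$ yields a diffeomorphism of pairs $(S^4,S_0)\cong(S^4,S_1)$, which is the final assertion; by the symmetry $t\mapsto -t$ the argument for $B$ is identical to that for $A$.

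First I would record the local effect of a critical point. In the standard codimension-one Morse chart $W=\{t=q(x)\}$ with $q\colon\mathbb{R}^4\to\mathbb{R}$ of index $\lambda$, the slice of the region lying on the larger-$t$ side is the sublevel set $\{q\le t\}$ and that of the other region is the superlevel set $\{q\ge t\}$. Hence crossing an index-$\lambda$ critical point attaches, along $S_t$, a $\lambda$-handle to the region on one side and, dually, a $(\lambda+1)$-handle to the region on the other; for instance an index-$1$ point converts the standard $B^4\cup_{S^3}B^4$ into $(S^1\times D^3)\cup_{S^1\times S^2}(S^2\times D^2)$, a $1$-handle on one side and a $0$-framed $2$-handle on the other. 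This already exposes the role of the hypothesis: an index-$2$ critical point is exactly the one that would staple a genuine middle-dimensional $2$-handle directly onto a region, changing its diffeomorphism type in an essential and generally irreversible way---the connected-sum phenomenon of ordinary handle trading. Excluding index $2$ leaves only indices $0,1,3,4$, i.e.\ births, tubes, cuts and deaths of the slices $S_t$.

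The crux is then to show that, over the whole cobordism, these remaining modifications cancel in pairs. After sorting, $S_0=S^3$ acquires births ($\lambda=0$) and tubes ($\lambda=1$) up to a middle level $S_m$, while---reading downward from the top---$S_1=S^3$ acquires the duals of deaths ($\lambda=4$) and cuts ($\lambda=3$) down to $S_m$; since both ends are the standard sphere, the ascending and descending data should be matchable. I expect the main obstacle to be precisely the cancellation of the induced \emph{middle-dimensional} handles: a tube on the lower side deposits an index-$2$ handle on the opposite region (to be undone by the matching cut), and a cut occurs along a possibly \emph{knotted} $2$-sphere in $S^4$, so a naive attempt to cancel these in $\mathcal{A}$ is a smooth four-dimensional Whitney problem, where no $h$-cobordism theorem is available. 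The no-index-$2$ hypothesis is what I expect to rescue this: because every surviving move is a geometrically standard birth, unknotted tube, cut or death carried out inside $S^4$, the handles it produces should be cancellable explicitly and ambiently, without ever invoking a four-dimensional Whitney move on essential homology classes. Carrying out this matching with care---verifying that the handles cancel \emph{geometrically} and not merely homologically, and that the knotting of the $2$-spheres along which cuts occur does not obstruct the pairing---is the heart of the argument and completes the proof that $A_0\cong A_1$ and $B_0\cong B_1$.
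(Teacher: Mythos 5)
Your setup is the same as the paper's: put the handles of $W$ in index order, track how each critical point modifies a complementary region, and compare the resulting descriptions at a middle level. But your proposal stops exactly where the paper's proof starts doing real work. After the bookkeeping one has two descriptions of the halfway region, $A_{1/2}\cong A\natural(\natural_s S^2\times B^2)\natural(\natural_r S^1\times B^3)$ and $A_{1/2}\cong A'\natural(\natural_s S^2\times B^2)\natural(\natural_r S^1\times B^3)$ (equations \eqref{eqn:1} and \eqref{eqn:2} of the paper), and the entire content of the theorem is cancelling the common $\natural$-factors---nontrivial precisely because $4$-manifolds do not obey unique factorization. Your text names this step (``the heart of the argument'') and asserts the handles ``should be cancellable explicitly and ambiently,'' but supplies no mechanism; that is a restatement of the problem, not a proof. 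The paper's mechanism is: (i) Nielsen moves, realized by $1$-handle slides, modify the composed diffeomorphism of the two descriptions until it induces the identity on $\pi_1$; (ii) homotopy-implies-isotopy for circles in a $4$-manifold matches the standard $\pi_1$-generating circles geometrically, so framed surgery performed in \emph{both} domain and range converts every $S^1\times B^3$ factor into an $S^2\times B^2$ factor; (iii) attaching $3$-handles along the spheres $S^2_i\times\mathrm{pt}$ in the domain recovers $A$, while the corresponding $3$-handles in the range are attached along their a priori uncontrolled images; and (iv) Theorem 1 of \cite{Trace82}---attaching a fixed number of $3$-handles to a $1$-connected $4$-manifold, with connected resulting boundary, yields a diffeomorphism type independent of where they are attached---shows the range is nonetheless diffeomorphic to $A'$. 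Step (iv) is the external input that sidesteps exactly the knotted-sphere/Whitney-move difficulty you flag; without it, or a substitute for it, your outline cannot be completed.

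Two symptoms confirm that the gap is real. First, your argument never uses that $A$ and $B$ are simply connected (they are homotopy $4$-balls because the hypersurface is $S^3$), yet the paper needs this hypothesis twice: for homotopy-implies-isotopy and for Trace's theorem; an argument that would work verbatim for arbitrary hypersurfaces should raise suspicion. Second, your local duality claim---``a $\lambda$-handle on one side and, dually, a $(\lambda+1)$-handle on the other''---is not literally true as stated: the other side \emph{loses} a $(4-\lambda)$-handle, i.e., has a regular neighborhood of a properly embedded co-core disk deleted, and identifying that deletion with a $(\lambda+1)$-handle attachment requires the co-core to be boundary-parallel. For $\lambda=1$ this is precisely the unknotted-arc statement the paper justifies by $\pi_1(A)\cong\{e\}$ together with homotopy-implies-isotopy; reading it off from the standard Morse chart assumes what must be proven.
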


\begin{proof}
By an isotopy of $W$, arrange that its handles be attached in index order.  Let us focus on a fixed side, $A$.  Depending on whether $0$- and $4$-handles of $W$ are ``inside'' or ``outside,'' their passage corresponds to punctures of $A$ formed (inside, $0$) /removed (inside, $4$) and disjoint $4$-balls formed (outside, $0$) /removed (outside, $4$).  Similarly an outside $1$-handle adds a $1$-handle to $A$ and an inside $3$-handle removes a $1$-handle from $A$ (by deleting it co-core).  Similarly an inside $1$-handle effectively \emph{adds} a trivial $2$-handle (This uses $\pi_1(A) \cong \{e\}$ and ``homotopy implies isotopy.'') and an outside $3$-handle adds a $3$-handle.

Consider the ``halfway'' codimensional submanifold $A_{1/2}\subset S^4\times \frac{1}{2}$, where $\frac{1}{2}$ denotes a generic level, after the $0$- and $1$-handles of $W$ have been attached but before the $3$- and $4$-handles.  Keeping track of maximal trees, it is easy to see the diffeomorphism type of $A_{1/2}$:

\begin{equation}\label{eqn:1}
A_{1/2} \cong A\natural(\natural_s S^2\times B^2)\natural(\natural_r S^1\times B^3),
\end{equation}
where
$$\begin{array}{ccc}
s & = & \sharp(\text{inside } 1\text{-handles}) - \sharp(\text{inside } 0\text{-handles}) \text{, and} \\
r & = & \sharp(\text{outside } 1\text{-handles}) - \sharp(\text{outside } 0\text{-handles}).
\end{array}$$
By symmetry we also have a description of $A_{1/2}$ starting from the closed interior $A^\prime$ of the \emph{other} embedding.

\begin{equation}\label{eqn:2}
A_{1/2}\cong A^\prime\natural(\natural_s S^2\times B^2)\natural(\natural_r S^1\times B^3)
\end{equation}

Four manifolds do not generally obey unique factorization but in this case we will show how to cancel factors and conclude $A = A^\prime$.

Begin with the composed diffeomorphism
\begin{equation}
g: A\natural(\natural_s S^2\times B^2) \natural(\natural_r S^1\times B^3) \rightarrow A^\prime\natural(\natural_s S^2\times B^2)\natural(\natural_r S^1\times B^3).
\end{equation}
$g$ determines an automorphism $\phi$ of the free group $F(y_1,\ldots,y_r)$.  Nielsen moves, which amount to relabelings $y_i\rightarrow y_i^{-1}$ and $1$-handle slides permit $g$ to be replaced by a similar diffeomorphism $g^\prime$ but now inducing the identity on $\pi_1$.  Let $\{\gamma_1,\ldots,\gamma_s\}$ be disjoint sccs representing standard $\pi_1$-generators in the source and $\{g^\prime \gamma_1,\ldots,g^\prime\gamma_s\}$ be their images under $g^\prime$.  ``Homotopy implies isotopy'' implies $\{g^\prime\gamma_1,\ldots,g^\prime\gamma_s\}$ is isotopic to $\{\gamma_1,\ldots,\gamma_s\}$.  Thus the result of framed surgery in \emph{both} domain and range is a diffeomorphism:
\begin{equation}\label{eqn:4}
h: A\natural(\natural_{r+s} S^2\times B^2) \rightarrow A^\prime \natural(\natural_{r+s} S^2\times B^2).
\end{equation}

The domain may be converted back to (a manifold diffeomorphic to) $A$ by attaching $r+s$ $3$-handles to the set of $2$-spheres $\{S_i^2\times \text{pt}, i = 1,\ldots,r+s\}$.  Attaching a corresponding collection of $3$-handles in the image, we obtain a diffeomorphism
\begin{equation}\label{eqn:5}
h^\prime: A\rightarrow A^\prime\natural(\natural_{r+s} S^2\times B^2)\cup (3\text{-handles attached to } h(S^2_i\times \text{pt})\text{, $i = 1,\ldots,r+s$}).
\end{equation}

However, Theorem 1 of \cite{Trace82} states that if a fixed number of $3$-handles are attached to a $1$-connected $4$-manifold and if the boundary after attachment is connected, then the diffeomorphism type of the result does \emph{not} depend on the details of where the $3$-handles were attached.  The hypothesis about connected boundary amounts to homological independence in our case and is easily verified.  Thus the right-hand side of (\ref{eqn:5}) must be diffeomorphic to $A^\prime$, implying $A$ diffeomorphic to $A^\prime$.  Similarly $B$ is diffeomorphic to $B^\prime$.
\end{proof}

\begin{scholium}\label{schm:1.5}
If $A_t$ is a side of $S^4\times t \setminus W$ for a generic level $t$, then $A\cong A_0\cong \text{Ball}^4$, i.e. is standard, iff $A_t$ is a ``standard'' manifold having the form $A_t = \amalg P_i$, each component $P_i \cong (4\text{-ball}) \natural(\natural_{q_i} S^3\times B^1)\natural(\natural_{r_i} S^2\times B^2)\natural_{s_i}(S^1\times B^3)$ for some $q_i$, $r_i$, $s_i\geq 0$.\qed
\end{scholium}

\section{Proof of Theorem \ref{Morse position}}
\noindent{\bf{Definition.}}
An embedding of a connected compact $3$-manifold without boundary $M$ into $\mathbb{R}^4$ is called \emph{Heegaard} iff:
\begin{enumerate}
    \item the fourth coordinate on the embedded $M$ is a generic ordered Morse function (critical points of higher index taking larger values), and
    \item every  generic level set is a Heegaard surface of its level $\mathbb{R}^3\times t$ (after $\mathbb{R}^3\times t$ is compactified to a $3$-sphere $3^3_t$).
\end{enumerate}
We also refer to an embedding $M\hookrightarrow N\times\mathbb{R}$, $N$ a compact $3$-manifold, as \emph{Heegaard} if it obeys the same comditions with $S^3$ replaced by $N$.

\noindent\emph{Note.}
Heegaard embeddings have unique local maxima and local minima.

\begin{theorem}\label{Morse position}
Let $M$ be a connected compact $3$-manifold without boundary admitting an embedding $e:M\hookrightarrow\mathbb{R}^4$ whose fourth coordinate is a Morse function with one local maximum.  Then $e$ is (smoothly) isotopic to a Heegaard embedding $f:M\hookrightarrow\mathbb{R}^4$.
\end{theorem}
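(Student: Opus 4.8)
The plan is to put $e$ into the desired form by a top-down sweep anchored at the unique maximum, maintaining the Heegaard property level by level as an inductive invariant. Compactify each $\mathbb{R}^3\times t$ to $S^3_t$ and write $\Sigma_t = e(M)\cap S^3_t$, with complementary regions $A_t$ (inside) and $B_t$ (outside). Since $M$ is connected and $\pi|_M$ has a unique local maximum, every super-level set $M^{\geq t}$ is connected, because the gradient flow carries all of it into the single top handle; this connectivity is the feature of the hypothesis I would exploit repeatedly. First I would isotope $e$ so that $\pi|_M$ is an ordered Morse function (critical values increasing with index) and, using connectivity, cancel all index-$0$ critical points but one: whenever there are two or more minima, the connectedness of $M$ forces an index-$1$ critical point forming a cancelling pair with some minimum, and in codimension one---where properly embedded homotopic arcs are isotopic---this Morse cancellation can be realized by an ambient isotopy of the embedding without creating self-intersections. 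After this reduction $\pi|_M$ has a single minimum, a single maximum, and equal numbers of index-$1$ and index-$2$ critical points (forced by $\chi(M)=0$).

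With the ordered form in hand I would sweep downward from the maximum. Just below the top, $\Sigma_t$ is a single $2$-sphere, so $A_t$ and $B_t$ are balls and the level is Heegaard of genus $0$. Descending, one meets first the index-$2$ critical points of $\pi|_M$, each of which attaches a tube to $\Sigma_t$ and raises its genus, and then the index-$1$ critical points, each of which compresses $\Sigma_t$ along a disk and lowers its genus, ending at the single minimum where the last sphere caps off. The inductive claim is that if $\Sigma_t$ is Heegaard, then after crossing the next critical point it remains so. Here Waldhausen's uniqueness of Heegaard splittings of $S^3$ is a convenient lever: being Heegaard, $\Sigma_t$ is already \emph{standardly} positioned, so both $A_t$ and $B_t$ are unknotted handlebodies and I may assume the next tube or compressing disk is attached to this standard picture. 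A tube added along an unknotted, unlinked arc then produces a genus-one-larger standard Heegaard surface, and a compression performed along a meridian disk reverses this, again leaving both sides handlebodies.

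The crux---and the step I expect to be the genuine obstacle---is showing that the attaching arcs of the tubes and the compressing disks can be made standard, i.e.\ that one never creates a \emph{knotted} handlebody whose complement fails to be a handlebody. This is exactly the phenomenon that the Proposition of Section 2 controls one dimension down, and the unique-maximum hypothesis is precisely what rules out the four-dimensional analogue of its bad ``$2$-handles on both sides'' configuration. Concretely, the connectivity of $M^{\geq t}$ controls the fundamental groups of the complementary regions and makes the relevant core curves and $2$-spheres in $S^3_t$ null-homotopic exactly when they bound; invoking ``homotopy implies isotopy'' for $2$-spheres in $3$-manifolds together with the codimension-one room for the one-dimensional attaching data, one isotopes the tubes and disks into standard position before reinserting the critical point. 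Reordering critical points of equal or decreasing index along the way is harmless, being realized by a vertical ambient isotopy.

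I would organize the write-up as an induction that peels critical points off from the top: remove the maximum's cap, apply the inductive hypothesis to the remainder (which still has a unique maximum after a small adjustment), and reinsert the top feature by a standard, hence Heegaard-preserving, tube or compression. The essential work is concentrated entirely in the unknotting step of the preceding paragraph; the Morse-rearrangement and minimum-cancellation steps should be routine given the extra fourth dimension and homotopy-implies-isotopy, and the genus bookkeeping is forced by $\chi(M)=0$.
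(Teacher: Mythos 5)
Your proposal has two fatal gaps, and both occur at exactly the places the paper expends its real effort.

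First, the minimum-cancellation step. Your argument for reducing to a single minimum uses only the connectedness of $M$ (each extra minimum pairs abstractly with some index-$1$ critical point), not the unique-maximum hypothesis; so if it worked, the same reasoning applied after the flip $t\mapsto -t$ would let one cancel all but one local \emph{maximum} of an arbitrary embedding. That is false: Theorem~\ref{thm:1.2} and Theorem~\ref{thm:1.3} exhibit embeddings (Levine/Livingston examples) for which \emph{every} isotopic embedding has at least $N$ local maxima. The underlying error is that an abstract Morse cancellation of a $(0,1)$-pair of $\pi\circ e$ need not be realizable by an ambient isotopy: ``properly embedded homotopic arcs are isotopic'' only controls the $1$-dimensional core of the cancelling $1$-handle, whereas the cancellation sweeps a $4$-dimensional region of $\mathbb{R}^4$ through which the remaining $3$-dimensional sheets of $M$ may pass, and they cannot be removed by general position ($3+3>4$). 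Notably, the paper never cancels minima directly: each sphere birth is met by an ``intervention arc'' traded across the level surface, and only at the very end is it shown (via the Gantt-chart/delaying argument) that these interventions amount to an isotopy of $e$.

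Second, the step you yourself flag as the crux --- standardizing the attaching arcs of tubes --- is resolved by an appeal that cannot work. ``Homotopy implies isotopy'' (Laudenbach) is a statement about $2$-spheres in $3$-manifolds; it is false for the $1$-dimensional attaching data you need: the core arc of a forming tube is null-homotopic rel endpoints in its side no matter how knotted it is (just as every knot in $S^3$ is null-homotopic), so homotopy-theoretic triviality yields neither an isotopy to a boundary-parallel arc nor a handlebody complement. Waldhausen's theorem lets you move $\Sigma_t$ to standard position, but that isotopy drags the attaching arc along with it and cannot standardize surface and arc simultaneously. The paper's route is genuinely different: it \emph{accepts} the knotted tube, restores the handlebody property of the level complement by trading extra $1$-handles (more intervention arcs, produced by Lemma~\ref{lem:2.3}, a Cerf-theoretic interpolation lemma that also solves the matching problem where the bottom-up and top-down sweeps meet at the middle level), and then cancels each intervention arc with a same-side $2$-handle using Lemma~\ref{lem:2.2} (an arc in a handlebody whose complement is a handlebody is boundary-parallel, proved via Jaco's handle addition lemma). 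It is precisely in this last cancellation step --- not in any level-by-level unknotting --- that the unique local maximum is used: on the top-down side no intervention arc ever meets a sphere birth, so the cancelling bigons exist. Without some substitute for these two lemmas and the matching/cancellation scheme, your induction does not close.
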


\begin{proof}
As explained earlier, we may first isotope $e$ to obtain the ordering condition.  Next we use some ``obvious-sounding'' but subtle $3$-manifold topology to stabilize the Heegaard decomposition of $M$ induced by the fourth coordinate height function.  We will argue that the stabilizations we select come in Morse-canceling pairs so the manifold $M$ is unchanged.  Furthermore, each $1$-handle to be canceled can be delayed in its appearance all the way up to the level of its canceling $2$-handle, implying that the isotopy class of $M\subset\mathbb{R}^4$ is also unchanged.  Here is the non-trivial fact from $3$-manifold topology:

\begin{lemma}\label{lem:2.2}
(Cf. C.\ Frohman \cite{Frohman89}.) Let $H$ be a handlebody (HB) and $\alpha\subset H$ a proper arc.  Suppose $H\setminus\alpha$ is also a HB.  Then $\alpha$ is boundary parallel (i.e., cobounds a bigon $B$ with complementary arc $\beta$ in $\partial H$).
\end{lemma}

\begin{proof}
By induction on genus.  The case $\text{genus}(H)=0$ follows easily.  Suppose Lemma \ref{lem:2.2} holds for $\text{genus}(H)\leq g$ and now consider the case $\text{genus}(H)=g+1$.  $H'=H\setminus\alpha$ is a HB of genus $g+2$, and conversely $H=H'\cup_\gamma 2$-handle, where $\gamma$ is a small linking circle to $\alpha$.  By Jaco's handle addition lemma \cite{Jaco84}, since $\partial H$ is compressible in $H$, $\gamma$ must fail to be disk busting, i.e., $\exists$ an essential disk $\Delta\subset H'$ with $\partial\Delta\cap\gamma=\emptyset$.  Thus, $\Delta\subset H'=H\setminus\alpha\subset H$.  Cutting $H$ along $\Delta$ yields a HB $J$ with $\alpha\subset J$ and $\text{genus}(J)<\text{genus}(H)$.  $J$ is either $H\setminus\Delta$ or the component of $H\setminus\Delta$ containing $\alpha$ if $\Delta$ separates $H$.  By induction $\alpha$ is boundary parallel in $J$, as witnessed by some bigon $B'$.  However, $B'$ is easily deformed (off two or one copy of $\Delta$) to a bigon $B\subset H$, completing the proof.
\end{proof}

We now introduce a lemma that interpolates between systems $S$ and $T$ of proper arcs with HB complements.

\begin{lemma}\label{lem:2.3}
Let $X$ be a compact connected $3$-manifold with boundary $\partial X$ ($\partial X$ is nonempty but not assumed connected).  Let $S$ and $T$ each be families of disjointly and properly embedded arcs in $X$ with HB complements, meaning $X\setminus S$ and $X\setminus T$ are HBs, $H_1$ and $H_2$, respectively, where we abused notation to write $S$ ($T$) for the union of arcs in $S$ ($T$).  Then arcs may be added one at a time to $S$ (i.e., successively deleted from $X$) until at step $k$ a maximum of arcs $\mathbb{U}$ is reached and then arcs are deleted one at a time until $T$ is reached so that at every step $1,\ldots,n$ the complement of the arcs $M\setminus S_i$ is a HB, $S_1=S$, $S_k=\mathbb{U}$, $S_n=T$.  The HBs $H_1$ and $H_2$ are not assumed to have the same genus.
\end{lemma}

\begin{proof}
Corresponding to $S$ and $T$ we produce Morse functions on $X$ $f_1$ and $f_2$, respectively, $f_i: (X,\partial X)\rightarrow ([0,1],0)$, $i = 1,2$.  The two Morse functions $f_i$ have no interior local minima and the descending $1$-manifolds of $f_1$ ($f_2$) are precisely $S$ ($T$).  In handlebody language $f_i$ gives rise to a handle decomposition of $X$ relative to $\partial X$ in which there are no $0$-handles, the $1$-handles have cores $S$ (if $i=1$ and cores $T$ if $i=2$), and the remaining handles of index $2$ and $3$ form the handlebodies $H_1$ or $H_2$, $i = 1$ or $2$.  To compare $f_1$ and $f_2$, we take a generic $1$-parameter family $f_t$, $1\leq t\leq 2$, of functions from $(X, \partial X)$ to $([0,1], 0)$ and go to work simplifying its Cerf diagram (Chapter 1 \cite{HatcherWagoner}).  The Cerf diagram lies in the rectangle $(t,r)\in [1,2]\times[0,1]$ and consists of the critical values of $f_t$ in $[0,1]$ together with an integer $0$, $1$, $2$, or $3$ labelling the index of the critical point.  The diagram contains smooth arcs of critical point transverse to the lines $\{t\times[0,1]\}$ together with a finite number of cusp points where the local expression for $f_t$ requires a cubic term.  Finally, the diagram contains a finite number of vertical arrows which mark handle slides, or in dynamical language ``saddle connections.''  Slides of $1$-handles (in dynamical language descending $1$-manifolds) over each other will be important to us.

While the generic $\{f_t\}$ will include critical points of index $= 0$, a move which amounts to passage through the quartic ``dovetail singularity'' (see p.~25 \cite{HatcherWagoner}) can, in any dimension, be used to remove all index $=0$ critical points from any $1$-parameter family which does not have such critical points at its endpoints.  We do this.  Next we eliminate all the cubic cusps of the form $f_t = -x_1^2 - x_2^3\pm (t - t_0)x_2 + x_3^2$ in local coordinates.  These cusps are the ``birth/death'' points of $(1,2)$-handle pairs.  This is accomplished by sliding all right-pointing ($\pm = -$) cusps past time $t=2$, i.e., postponing the death of all $(1,2)$-handle pairs, and sliding all left-pointing ($\pm = +$) cusps to before time $t=1$, i.e., ``preponing'' all births of $(1,2)$-handle pairs.
This stabilizes the two Morse functions $f_1$ and $f_2$ with additional canceling $(1,2)$-pairs.  The points of index $=1$ in the Cerf diagram now consists of a family of arcs each proceeding from $t=1$ to $t=2$.  These arcs may cross (in pairs) as the order of the index $=1$ points varies.  At this point, the only events left in the Cerf graphic affecting the $1$-handles (i.e., critical points of index $=1$) are finitely many, $k$, ``saddle  connections,'' or ``handle slides,'' in which a $1$-handle whose critical point is higher passes over a $1$-handle whose critical point is lower.

If there were no $1$-handle slides, the proof would be finished: we would simply see the original descending $1$-manifolds $S$ first stabilized (by adding to $S$ the descending $1$-manifolds of the additional $(1,2)$-pairs at $t=1$) and then destabilized, in a possibly different manner, to arrive at $T$.  At each step in the process the complement of the descending $1$-manifold is a union of $2$- and $3$-handles and therefore a HB.

At first, handle slides appear to be a problem because they do not seem expressible in the language of successively adding and then deleting arcs.  However, there is a convenient translation.  Suppose we follow the Cerf graphic to a time $t_{-}$ when a descending $1$-manifold, the arc $a$, is about to slide over another descending $1$-manifold $b$.  We now cease to follow the graphic but instead substitute a two-step process which emerges on the ``far side'' $t_+$ of the sliding event at time $t$, $t_- = t - \epsilon$ and $t_+ = t + \epsilon$.  Let us denote by $a+b$ the arc after sliding.  (This abbreviated notation does not completely specify the slide, since it does not record the path of the moving end point, but it is adequate for the present explanation.)  The arc $a+b$ is boundary parallel in the HB $X\setminus\{S_{t_-}\}$, and, similarly, the arc $a$ is boundary parallel in the HB $X\setminus\{S_{t_+}\}$.  Both of these boundary parallelisms are witnessed by a single embedded hexagon, $\beta$, that is a $2$-disk whose boundary has been divided into $6$ segments (see Figure \ref{fig:tikz}(a)).  The boundary of $\beta$ consists of three alternating sides running along $\{a,b,a+b\}$ in $\operatorname{interior}(X)$ and the other three alternating sides lying in $\partial X$.  This hexagon may be regarded as a bigon in $6$ ways by considering any $5$ of the $6$ segments as a single boundary arc.  Two of these $6$ ways are important for us.  First, relating $a+b$ to the five complementary sides, and second, relating $a$ to its five complementary sides.
Thus the two-step process relating the descending $1$-manifolds of $f_{t_-}$ and $f_{t_+}$ may be indicated as:
$$\{a,b\}\rightarrow\{a,b,a+b\}\rightarrow\{b,a+b\}.$$

The first bigon shows that $a+b$ is $\partial$-parallel in the HB $X\setminus($descending $1$-manifolds of $f_{t_-})\subset X\setminus (a\cup b)$ from which we conclude $X\setminus($decending $1$-manifolds of $f_{t_-})\cup (a+b)$ is also a HB.  The second bigon (which has the same underlying hexagon as the first) yields a redundant verification of this fact showing that $a$ is $\partial$-parallel in the HB $X\setminus($descending $1$-manifolds of $f_{t_+}) \subset X\setminus (b\cup a+b)$.  In any case, the two arrows above represent ``steps'' of the type claimed in the lemma: first an arc, $a+b$, is added to the set $S$ and then an arc, $a$, is deleted, all the while maintaining the property that the complement of the set of arcs is a HB.  The beginning and ending arc sets are the descending $1$-manifolds of $f_{t_-}$ and $f_{t_+}$, respectively.  The intermediate arc set can be thought of as the union of these; from the perspective of either end, it contains a single additional boundary parallel arc.

This almost proves the lemma, for we have given a procedure to add and then immediately remove one arc from $S_{t_-}$ proximate to each $1$-handle slide of the Cerf diagram.  The ``complement $=$ HB''-condition throughout is preserved.  If this procedure is preceded by the stabilization step and followed by the destabilization step, it comes close to proving the lemma.  The only missing feature is \emph{monotonicity}.  As the lemma is stated, we are to steadily add arcs until a maximum $\mathbb U$ is reached and then steadily delete arcs until $T$ is reached.  To achieve this refinement, it is necessary to clean up the collection $\{\beta_1,\ldots,\beta_k\}$ of $k$-hexagons---one for each $1$-handle slide---so that their interiors are disjoint.

To clarify this point we should review the exact meaning of the standard terminology within $3$-manifold topology of the phrase ``delete an arc.''  What is actually meant is ``delete the interior of a closed regular neighborhood of the arc'' so that compactness is preserved.  With this in mind---that the bigons $\beta$ do not truly have boundary running over these guiding arcs but rather running along a tube surrounding them---we will show that $\{\beta_1,\ldots,\beta_k\}$ can actually be taken to be pairwise disjoint.  This allows the desired reordering of steps; if $\beta_i$ is disjoint from $\beta_j$, $j > i$, the arc-elimination step based on $\beta_i$ can be delayed until after all arcs forming the set $\mathbb U$ have been created.

Disjointness is established as follows: Let $\beta_i$ be the hexagon representing the slide of $a$ over $b$.  Pushing $a$ nearly all the way across $\beta_i$ (see Figure \ref{fig:tikz}(a) $\rightarrow$ Figure \ref{fig:tikz}(b)), $\beta_i$ now lies in a small collar of the arc $\overline{\beta_i}$ consisting of $\partial\beta_i\setminus a$.  Keeping later $\beta_j$, $j > i$, disjoint from $\beta_i$ is fully encoded in the problem of keeping $\beta_j$ disjoint from the arc $\overline{\beta_i}$.  But the later $1$-handle slides may be achieved by ambiently isotoping the attaching region (the ``foot'') of the $1$-handle.  This ambient isotopy carries sheets of interior ($\overline{\beta_i}$) in front of the moving foot, achieving $\beta_j\cap\overline{\beta_i} = \emptyset$ and therefore $\beta_j\cap\beta_i = \emptyset$.  One such sheet is illustrated (Figure \ref{fig:tikz}(c) $\rightarrow$ Figure \ref{fig:tikz}(d)) for a hypothetical slide carrying some (undrawn) $1$-handle over $a+b$.
\end{proof}

\begin{figure}[hbpt]
  \includegraphics{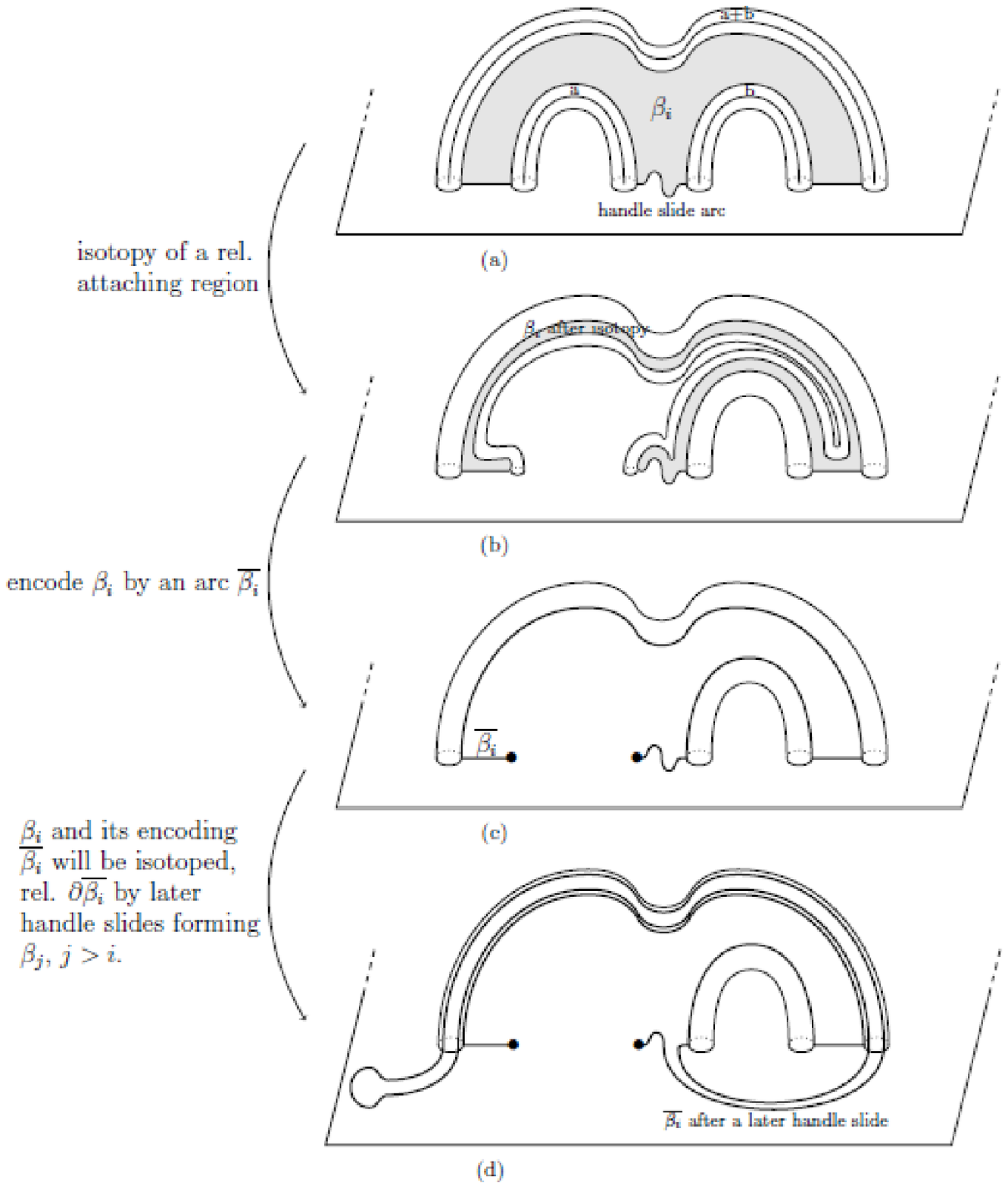}
\caption{}
\label{fig:tikz}
\end{figure}

We return to the proof of Theorem \ref{Morse position}, taking $e:M\hookrightarrow\mathbb{R}^4$ with the fourth coordinate being an ordered Morse function on $M$ with a unique local maximum.  Our plan is to intervene by trading 3D $1$-handles within various levels $\mathbb{R}^3_t\subset\mathbb{R}^4$ to enforce the condition that at each generic level $M\cap\mathbb{R}^3_t\subset\mathbb{R}^3_t\subset S^3_t=R^3_t\cup\infty$ is a Heegaard surface.  It will not be clear until late in the proof that these ``interventions'' can be achieved by an isotopy of $e$; in fact, the authors initially expected that each arc added and later removed would change the topology of $M$ by $\sharp S^1\times S^2$.  However, we discovered that when $e$ has a unique local maximum, Lemma \ref{lem:2.2} permits each arc added to be canceled with a $2$-handle rather than being removed by a ``pinch-off.''  This preserves the topology of $M$ and in fact the isotopy class of $e$.  (In general, when $e$ has multiple local maxima, the $\sharp S^1\times S^2$ factors are inevitable, but Theorem \ref{thm:1.4} shows that they do \emph{no harm} in the case $M\cong S^3$.)

We work from the bottom up to the middle level $S^3_0$, some fixed level between the highest $1$-handle (of $\pi_4\circ e$) and the lowest $2$-handle, and independently from the top down to the middle level.  We encounter a matching problem near $S_0^3$, which is solved by Lemma \ref{lem:2.3}.  In terms of the Heegaard decomposition $M=X\cup Y$ associated to $\pi_4\circ e$, the proof will separately modify and then match up $e|_X$ and $e|_Y$:
\begin{center}
\includegraphics{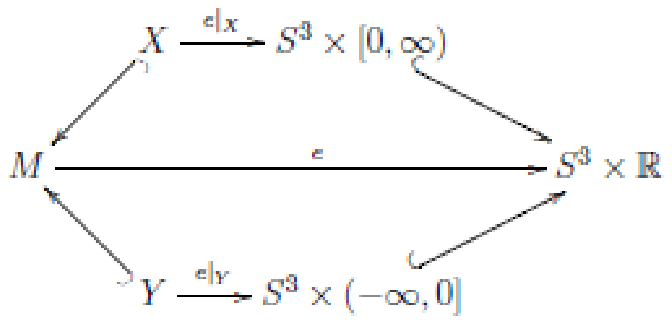}
\end{center}

We start by studying the restriction $e|_Y$ and watching slices of this embedding appear in successive $S^3$-levels as the fourth coordinate $t$ is increased.

In the lowest generic level of $Y$, we see a $2$-sphere.  The next critical point will be index $0$ or $1$, ``in'' or ``out.''  So we see one of four things: a second $2$-sphere ``born'' in or out, or a tube forming again either to the inside or to the outside.  The birth of a $2$-sphere will certainly produce a non-Heegaard level (Heegaard surfaces are connected) and the creation of a tube would also if it were ``knotted.''  Rather than explain narrowly how to maintain the Heegaard property across this second critical point, we may as well treat the general case.  So inductively assume that the generic levels up to $t_{-}$ are Heegaard (we only actually use that the level at $t_{-}$ is Heegaard) and that a critical point at level $t$ is about to destroy the Heegaard property by birthing either a $2$-sphere or a non-boundary parallel, we will call it ``knotted,'' tube.

In both cases we intervene by trading one or more $1$-handles from one side of $M\cap S^3_{t_{-}}$ to the other, i.e. the $1$-handle(s) is (are) deleted from one side and added to the other.  We refer to these intervention $1$-handles as \emph{intervention arcs} (IAs).  The terminology is intended to emphasize the $1$-handle core.  We do so for two reasons.  First, on the deleted side, as in Lemma \ref{lem:2.2}, it is more usual to speak of a boundary parallel arc, not a $1$-handle.  Second, on the side to which the intervention $1$-handle is added, nothing---neither later IAs nor tubes of the evolving $M\cap S_t^3$---need enter the $1$-handle.  Thus these $1$-handles should be pictured as infinitesimally thin and with no internal structure, as clarified below.

Write $M\cap S_t =: A_t \cup_{\Sigma_t} B_t$.  As $t$ increases from $-\infty$ to $0$ (the middle level), we see $2$-spheres and tubes forming on the inside ($A$) and outside ($B$); these are $0$- and $1$-dimensional events, respectively.  In the next paragraphs we explain \emph{how} to find IAs to keep all levels up to $0$ Heegaard; in this paragraph we explain \emph{where} (in what submanifold) the IAs lie.  An inside (outside) IA $\alpha$ introduced at time $t_0 < 0$ (it will always persist to at least $t=0$) will lie in $A_t \setminus \cup_i \alpha_i$, where the union is over all IAs, inside or outside, introduced earlier.  By general position, $\alpha$ may be assumed disjoint from the events ($2$-spheres and tubes) occurring from the time it is introduced up to time $t=0$.  The point to notice is if $A_t$ is enlarged by trading a $1$-handle $\bar{\alpha}$ surrounding, for example, an \emph{outside} IA $\alpha$, we do not consider the ``new material'' in $\operatorname{int}(\bar{\alpha})$ available for future IAs; future IAs are constrained to stay within $A_t$ (not $A_t \cup 1$-handles).  This constraint will not actually make the task (next paragraph) of locating IAs more difficult because the operative assumption is that the manifold in which a tube (or sphere) is forming is a HB (and the purpose of adding IAs is to maintain the HB property.)  The constraint that (say, for $A_t$) we avoid the outside $1$-handles around previous outside IAs merely cuts the HB in which we need to produce IAs into a disjoint union of lower genus HBs.  There is no extra trouble associated with working in these.  Now we turn to the construction of IAs.

In the case of a $2$-sphere birth (i.e., local minimum of $\pi_4\circ e$ at time $t$), a single intervention arc is sent from $S:= M\cap S^3_{t_{-}}$ to meet the $2$-sphere as it appears so that no new local minimum actually occurs.  In the case a tube is forming (due to an index $=1$ critical point of $\pi_4\circ e$ at time $t$), we intervene as follows.  Let $P$ be the side of $S$, minus any previously introduced IAs, in which the tube is forming along an arc $c_0$.  If $P\setminus c_0$ is a HB we consider the tube unknotted and do nothing, i.e. simply let the tube form.  If $P\setminus c_0$ is \emph{not} a HB, let $c_1,\ldots,c_k$ be $1$-handle cores for any handle decomposition (HD) of $P\setminus c_0$ so $P\setminus C=\text{HB}$, $C=\{c_0,\ldots,c_k\}$.  Now apply Lemma \ref{lem:2.3} with $X=P$, $S=\emptyset$, and $T=C$ to find a family of arcs which can be successively added (to the ``deleted set,'' therefore deleted), forming the set $\mathbb{U}$, to arrive at $S_{t_+}$ with not only a tube around $c_0$ formed but $1$-handles formed around all of $\mathbb{U}$.  At each generic level the Heegaard decomposition property has been preserved.  Let $S$ be the totality of arcs added between $-\infty \leq t \leq 0$.

Now turning the fourth coordinate upside down, do the same thing for $X$ that we have just done for $Y$, but in this case the upside-down $X$ has a unique local minimum, so no IAs met $2$-sphere births---a feature we will exploit.  Let $T$ be the totality of deleted IAs from X as we approach the middle level from above, $0_+\leq t\leq\infty$.  Lemma \ref{lem:2.3} addresses the matching problem between $S$ and $T$; it produces a family (whose greatest extent is $\mathbb{U}$ at level $0$) interpolating between $S$ and $T$ with all levels Heegaard.  The required cancelation of IAs is organized by the \emph{Gantt chart} in Figure \ref{fig:1}.

\begin{figure}[hbpt]
    \includegraphics{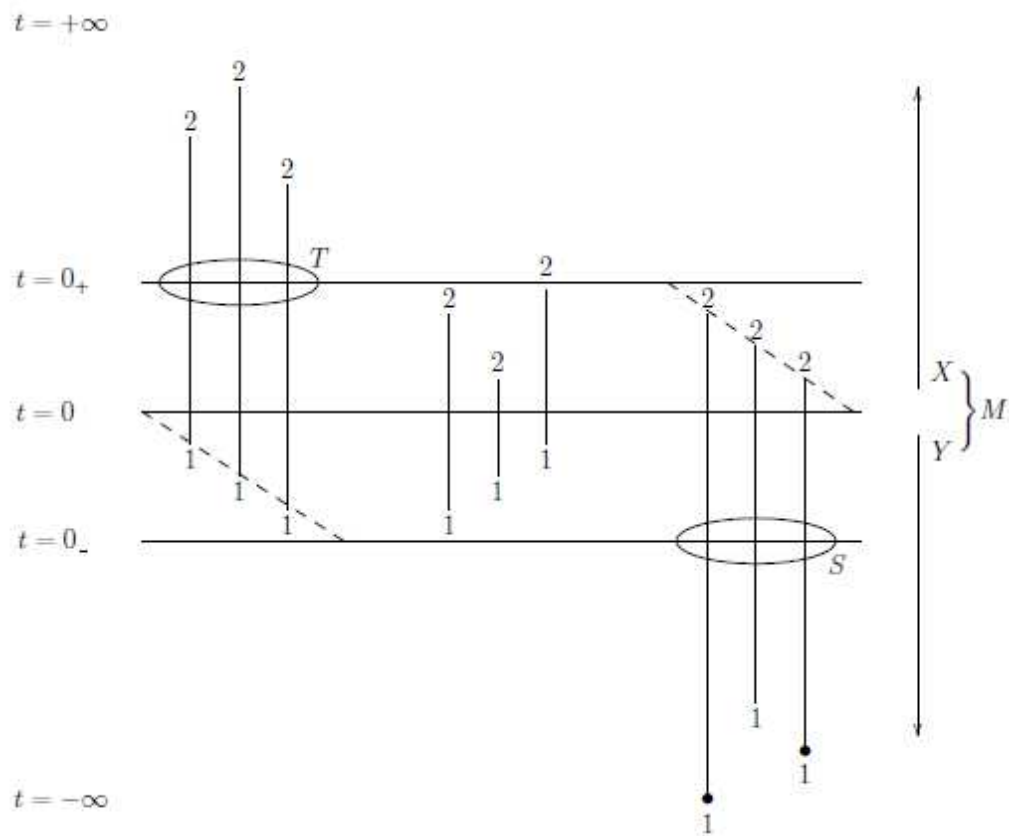}
    \caption{Gantt chart for IAs}
    \label{fig:1}
\end{figure}

Figure \ref{fig:1} shows the time history or \emph{trace} of IAs for an embedded $3$-manifold, realized through our interventions which we will show to be $f: M\hookrightarrow\mathbb{R}^4$, $f$ isotopic to $e$.  The balls at the bottom of some traces indicate the IA prevented a $2$-sphere from forming (running time positively for $Y$ and negatively for $X$).  Note that, per hypothesis, there are no ball markers for $t>0$.  The integers $1$ and $2$ are the index of the \emph{additional} (beyond those of $e$) critical point of the embedding: $1$ for creation of an IA and $2$ for its cancelation by a dual $2$-handle, not yet described in detail.  Logically, the reader may take the view that the $2$ labels tentatively describe pinching off IAs (to the \emph{opposite} side) to produce a ($\sharp S^1\times S^2$)-stabilized embedding $e$.  In the next two paragraphs, after we have explained the alternative cancelation by bigons (on the \emph{same} side), Figure \ref{fig:1} should then be interpreted as describing a new embedding $f: M\hookrightarrow\mathbb{R}^4$ of the original manifold.

Figure \ref{fig:1} shows the arc set $S$ arriving toward $0$ from below and the arc set $T$ arriving toward $0$ from above.  The several arcs in the middle, together with $S\cup T$, constitute the $\mathbb{U}$ in Lemma \ref{lem:2.3}.  Sweeping from $0_{-}$ to $0_+$, we see $S$ expanding to $\mathbb{U}$ at $0$ and then contracting to $T$.  The IA traces in Figure \ref{fig:1} do not indicate which IAs are \emph{inside} and which are \emph{outside}.  The $2$-handles (or bigons) canceling a given IA, which we now describe, are on the same side as the IA.  In fact, the two sides, \emph{in} and \emph{out}, do not interact in any important way; see Scholium \ref{schm:2.4}.

The key observation is that the side containing any given IA $\alpha$ is a HB both immediately before and immediately after the upper endpoint of its trace in Figure \ref{fig:1}.  Thus, Lemma \ref{lem:2.2} tells us that $\alpha$ was boundary parallel in its side.  Let $b$ be a $t$-level bigon in the complement of $M$ and all IAs, $\partial b = \alpha\cup\beta$, where the arc $\beta$ may run over (the $1$-handle sleeves around) IAs as well as $M\cup S_t^3$.  Use $b$ to cancel $\alpha$.  More precisely, use a $2$-handle with core $b$ to cancel the $1$-handle with core $\alpha$.  This manifestly describes a second embedding $f: M\rightarrow\mathbb{R}^4$ of the same $3$-manifold, as the additional $1$- and $2$-handles introduced come in Morse-canceling pairs.

However, a moment's reflection reveals that $f$ is actually isotopic to $e$.  Each intervention arc $\alpha$ (in other language, its surrounding $1$-handle sleeve) is collapsed along a bigon (canceled by a $2$-handle based $1$-surgery).  There is a free parameter: $\alpha$ may be introduced early---as we have done---to preserve the Heegaard property, or later.  Starting with the arc $\alpha$, which is \emph{first canceled}, according to the Gantt chart, postpone its time of introduction until it coincides with the moment it is canceled into its level surface $M\cap S_t^3$ (and perhaps other IA sleeves---these arcs are still present because we have delayed only the first canceled IA $\alpha$).  Now proceed to the second canceled IA $\alpha_2$.  We may similarly delay its introduction to exactly the moment of cancelation.  Because the Gantt chart shows the Heegaard property persisting after $\alpha$ is no longer present, the bigon $b_2$ for $\alpha_2$ will not pass over $\alpha$, so it is harmless that it has been canceled. Proceeding from earliest to latest to be canceled, the introduction times of the IAs ($1$-handles) may be delayed up to the moment they are canceled into the current level surface modified by sleeves around the remaining uncanceled IAs.  This constructs an isotopy from $f$ to $e$.
\end{proof}

\noindent\emph{Remark.}
Symmetry is broken in the proof by canceling handles from bottom to top. In fact, the proof of Theorem \ref{Morse position} breaks down if any of the IAs of $X$ connect disjoint $2$-spheres, corresponding to a non-unique local maximum.  In that case, if one tests the hypothesis of Lemma \ref{lem:2.2} by deleting the IA $\alpha$ (i.e., filling it back into the manifold) just before its upper end point (in the Gantt chart), we see that the resulting complementary region has two boundary components--one a $2$-sphere---and so is not a HB.  Hence, the hypothesis is not satisfied.  Even for homological reasons, it is evident that no bigon $b$ can exist for $\alpha$.

Since the \emph{inside} and \emph{outside} are treated independently in the proof, we have the following scholiums (and extensions):

\begin{scholium}\label{schm:2.4}
Assume $M$ is a closed connected $3$-manifold. Let $e: M\hookrightarrow N^3\times\mathbb{R}$, $N$ any closed $3$-manifold, be an embedding with fourth coordinate an ordered Morse function.
\begin{enumerate}
    \item  If there exists  $t\in\mathbb R$ such that $e(M)\cap (N\times t) \subset N\times t$ is a Heegard surface and if in addition all local maxima of $e$ have height $>t$ and all local minima have height $<t$, then $e$ is isotopic to $f$ in Heegaard position.
    \item If for some generic $t$, $e(M)\cap (N\times t)$ bounds a HB in $N\times t$ to one side, then $e$ is isotopic to $f$ so that, on that side, all generic levels are disjoint unions of HBs.  In particular, $f$ will have no interior local minima or maxima.
    \item If there are two generic levels $t_\text{in}$ and $t_\text{out}$, possibly distinct, each between the highest local minimum and the lowest local maximum, so that $e(M)\cap (N\times t_\text{in}) $ ($e(M)\cap ( N\times t_\text{out})$) bounds a HB in $N\times t$ to the inside (outside), then $e$ is isotopic to $f$ in Heegaard position.
\end{enumerate}
\end{scholium}

\begin{proof}
First, the proof made no use of the levels being $\cong S^3$; they could be a general $N$.
\begin{enumerate}
    \item In the proof of Theorem \ref{Morse position}, a level just below the unique local maximum serves as a (Heegaard genus $=0$)-\emph{initial condition} (ic) below which no local maxima occur and the constructed isotopy between $e$ and $f$ is relative to this ic and supported below it.  The proof is unchanged if at any level $t$, some Heegaard decomposition ic replaces the one of genus $=0$, so long as no local maxima occur lower than $t$.

        In this case, the isotopy produced below $t$ is also relative to the identity at level $t$ (and above) and deforms $e$ to an embedding obeying the Heegaard property \emph{below} level $t$, for $t^\prime \leq t$.  Now turning the fourth coordinate upside down, an isotopy to Heegaard position \emph{above} level $t$ can be found by the same reasoning.  The two isotopies fit together to glue the desired result.
    \item The Gantt chart, Figure \ref{fig:1}, may be thought of as two non-interacting charts, one for the \emph{inside} and one for the \emph{outside}, superimposed on each other.  The creation of ($1$-$2$) pairs, following (1) above, which produces $f$, and the Morse cancellation of such pairs, which produces the isotopy from $e$ to $f$ can both be done on a single side with the claimed result.
    \item Apply (2) first to one side and then to the other to obtain the claimed result.
\end{enumerate}
\end{proof}

The next theorem achieves a standard form stated (for $3$-spheres in $S^3\times\mathbb R$) by Bill Eaton at U.\ C.\ Berkeley in 1980 during a series of lectures whose goal was to prove the Schoenflies conjecture.  We have no written record, but fortunately his statement was recalled to us by Bob Edwards.  With the notation of \ref{schm:2.4} we have:

\begin{theorem}\label{thm:2.5}
(Eaton-Edwards position)  Any smooth embedding $e: M\hookrightarrow N^3\times\mathbb{R}$ whose fourth coordinate is a Morse function with a unique local maximum (note that there is no assumption about local minima) is isotopic to the following folded position.  Specifically, $M$ has some decomposition into a ``chain'' of three submanifolds with boundary with the first and second glued along $\partial_1$ and the second and third glued along $\partial_2$: $M=H_1\cup_{\partial_1} B\cup_{\partial_2} H_2$, where $H_i$ are HBs, $i = 1,2$.  There are embeddings $H_1\overset{i}{\hookrightarrow}N^3\times -1$, $H_2\overset{j}{\hookrightarrow}N^3\times +1$ and $\bar{B}\overset{k}{\hookrightarrow}N^3\times 0$, where the bar indicates reversed orientation, so that if $p \in \partial_1 = H_1\cap B$ and $q\in\partial_2 = B\cap H_2$, $i(p)$ and $k(p)$ have their first three coordinates equal while $j(q)$ and $k(q)$ also have their first three coordinates equal.  Vertically embedded collars $\partial_1\times [-1,0] \subset N\times [-1,0]$ and $\partial_2\times [0,1] \subset N\times [0,1]$ interpolate between the three disjoint images $i(H_1)$, $k(\bar{B})$, and $j(H_2)$ to parameterize an embedding of $M$ in $N\times \mathbb R$.  (Figure \ref{fig:2} displays this ``folded'' embedding.)
\end{theorem}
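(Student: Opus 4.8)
The plan is to put the embedding into Heegaard position and then ``crush'' each of the two handlebodies of the associated Heegaard splitting of $M$ into a single level, leaving a product collar at the middle level. First I would apply Theorem \ref{Morse position}, which is available for a general $N$ by the opening observation in the proof of Scholium \ref{schm:2.4} (the argument never used $N\cong S^3$). This replaces $e$ by an isotopic Heegaard embedding; in particular the new embedding has a \emph{unique} local minimum as well as a unique local maximum, and after reordering we may fix a generic middle level $N\times 0$ lying above the minimum and all index-$1$ critical points and below all index-$2$ critical points and the maximum. Write $\Sigma_0 = f(M)\cap (N\times 0)$; this is a Heegaard surface of $N\times 0\cong N$, bounding handlebodies $V_-$ and $V_+$ to its two sides.

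Next I would record the decomposition. The surface $\Sigma_0$ cuts $M$ into the sublevel piece $f^{-1}(N\times(-\infty,0])$, assembled from the minimum and the index-$1$ handles and hence a handlebody, and the superlevel piece $f^{-1}(N\times[0,\infty))$, dually a handlebody. Carving a bicollar $B\cong\Sigma_0\times[-1,1]$ out of a neighborhood of level $0$ leaves handlebodies $H_1$ (below) and $H_2$ (above) with $\partial_1=H_1\cap B\cong\Sigma_0$ and $\partial_2=B\cap H_2\cong\Sigma_0$, so that $M=H_1\cup_{\partial_1}B\cup_{\partial_2}H_2$ exactly as required.

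The main step is the flattening. Below the middle level the height function $\pi_4\circ f$ has only index $\le 1$ critical points, and by the Heegaard property every generic level is a connected surface bounding a handlebody; this is exactly the situation controlled by Scholium \ref{schm:2.4}(2). I would use this structure to produce an ambient isotopy of $N\times(-\infty,0]$, fixing $N\times 0$, that carries $f(H_1)$ to the \emph{vertical} configuration consisting of a flat copy of $V_-$ in $N\times(-1)$ together with the straight collar $\partial_1\times[-1,0]$: concretely, the index $\le 1$ handles are slid down to a common level and the resulting slab is identified with the inside handlebody $V_-$ of $\Sigma_0$ (note $H_1\cong V_-$, both being genus-$g$ handlebodies with boundary $\Sigma_0$), while the thin collar of $\partial_1$ is left rising monotonically from level $-1$ to level $0$. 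Turning $\pi_4$ upside down and repeating flattens $H_2$ into $N\times(+1)$, and the bicollar $B$ flattens into $N\times 0$ as a bicollar of $\Sigma_0$, giving the maps $i$, $j$, and $k$.

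Finally I would assemble the pieces: the two vertical collars $\partial_1\times[-1,0]$ and $\partial_2\times[0,1]$ interpolate between the three disjoint flat images, and because these collars are literally vertical the first three coordinates of $i(p)$ and $k(p)$ agree along $\partial_1$, and those of $j(q)$ and $k(q)$ agree along $\partial_2$, which are the asserted matching conditions. The composite is the folded embedding, isotopic to $f$ and hence to $e$. I expect the hard part to be the flattening step: one must verify that converting the Morse-theoretic build-up of each handlebody into a single flat level plus a collar is realized by an honest ambient isotopy of the embedded surface in $N\times\mathbb{R}$, and this is precisely where ``only index $\le 1$ below the middle level'' together with the Heegaard (handlebody-bounding, connected) levels furnished by Scholium \ref{schm:2.4} are indispensable.
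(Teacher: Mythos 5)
Your opening move (Heegaard position via Theorem \ref{Morse position}, valid for general $N$) agrees with the paper, but after that your decomposition and your flattening step diverge from the paper's proof, and the flattening step has a genuine error. You take $H_1$ to be the \emph{entire} lower handlebody $Y$ of the induced Heegaard splitting (with $B$ a product bicollar $\Sigma_0\times I$), and you claim an ambient isotopy fixing $N\times 0$ carries $f(Y)$ to a flat copy of the \emph{inside} handlebody $V_-$ plus a vertical collar, justified by ``$H_1\cong V_-$, both being genus-$g$ handlebodies with boundary $\Sigma_0$.'' Abstract diffeomorphism is not enough. An isotopy fixing the middle level must match meridian systems on $\Sigma_0$: a curve bounds a disk in $f(Y)$ iff it bounds a disk in the image configuration, and all meridians of your flat-plus-collar configuration bound on a \emph{single} side of $\Sigma_0$. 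But the meridians of $f(Y)$ are the cocore circles of its $1$-handles, and these are mixed: the cocore of an \emph{outside} $1$-handle (tube protruding into the outer region of the level surface, whose enclosed solid tube is swallowed by the inner region) bounds a disk in $V_-$, whereas the cocore of an \emph{inside} $1$-handle bounds a disk in $V_+$ and is essential in $V_-$. So whenever $Y$ has $1$-handles of both types---the generic situation---$f(Y)$ is isotopic rel level $0$ to \emph{neither} a pushed-down $V_-$ nor a pushed-down $V_+$, and your flattening is impossible. (Scholium \ref{schm:2.4}(2) cannot rescue this: it produces isotopies making levels bound handlebodies to one side; it says nothing about crushing the embedded manifold into a single level.)

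This inside/outside obstruction is exactly what the ``fold'' in the statement exists to absorb, and it dictates the paper's different batching, which you have bypassed. In the paper's proof $H_1$ is only the $0$-handle plus the \emph{outside} $1$-handles of $Y$ (precisely the handles whose cocore disks lie on the inner side, hence can be pushed down flat); the \emph{inside} $1$-handles of $Y$, together with the outside $1$-handles of the reversed $X$ (i.e.\ the outside $2$-handles), are placed in the middle piece $B$, which is embedded \emph{orientation-reversed} at level $0$---folding over is what turns these wrong-sided handles into flattenable ones; and $H_2$ is the maximum plus the inside $1$-handles of reversed $X$. In particular the paper's $B$ is a cobordism containing both $1$- and $2$-handles, not a product; with your product $B$ the fold is vacuous, and the theorem would be asserting that the lower half of every Heegaard embedding can be pushed down onto one side of the middle level, which the meridian count above refutes. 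The other ingredient the paper uses, and which your write-up does not engage, is the Remark preceding the theorem: in Heegaard position the $1$-handles carry no intrinsic ordering and each is standard in a chart (no knotting, linking, or nesting), which is what licenses reordering the handles into the three batches and then flattening each batch into its level.
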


\begin{figure}
    \includegraphics{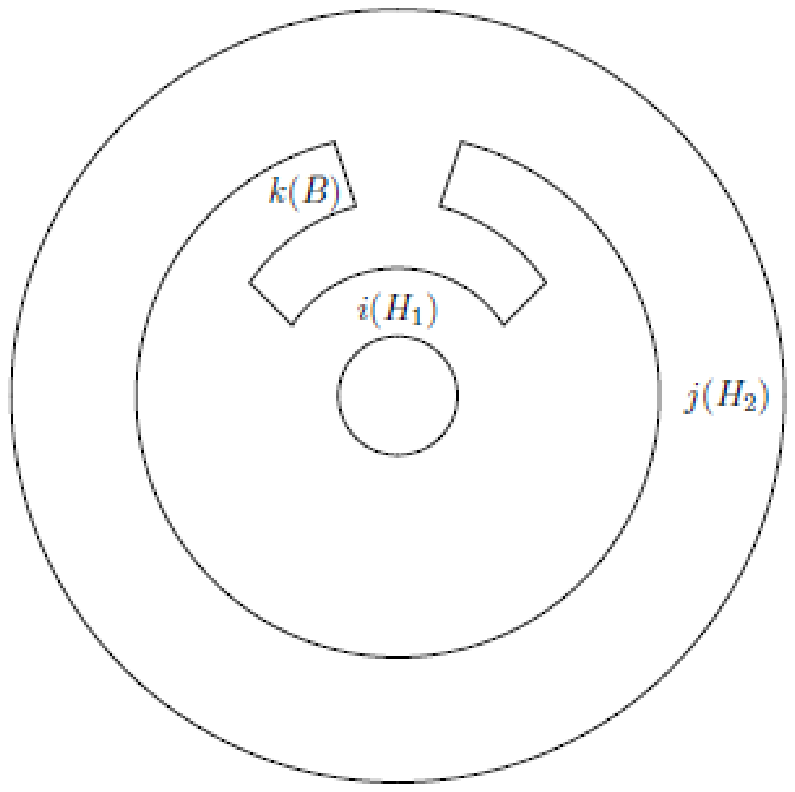}
    \caption{$S^3\times R$}
    \label{fig:2}
\end{figure}

\noindent\emph{Remark.}
Once $e$ is isotoped to Heegaard form, $f: M\rightarrow N\times \mathbb{R}$.  There is \emph{no} intrinsic ordering imposed on the $1$-handles attached to form $Y$ or (reversing the $\mathbb{R}$-coordinate) the $1$-handles attached to form $X$, $M=X\cup Y$.  Each handle is isotopic to a standard handle in a chart---there is no knotting, linking, or nesting to tubes.  All the data for the embedding $f$ is expressible in the gluing data at the middle level, which is the subject of Section \ref{Goeritz} and closely related to the study of the Goeritz group.

\vspace{.1in}
\noindent\emph{Proof of Theorem \ref{thm:2.5}.}
As remarked above, we may reorder the handles of $M$ as induced by the fourth coordinate $\pi_4$ of $f$ to appear in the following order and \emph{batched} as indicated:
\begin{itemize}
    \item $H_1 = 0$-handle and outside $1$-handles of $Y$,
    \item $B =$ (inside $1$-handles of $Y$) $\cup$ (outside $1$-handles of $X$)$_{\pi_4\text{-reversed}}$,
    \item $H_2 =$ ($0$-handle and inside $1$-handles of $X$)$_{\pi_4\text{-reversed}}$.
\end{itemize}
Flattening $f$ so that it is no longer in Morse position but so that the above batches occur simultaneously realizes the desired folded form.\qed

\begin{scholium}\label{schm:2.6}
If in \ref{thm:2.5} $M$ is a rational homology sphere and $N\cong S^3$, we may further arrange that the Morse function $f$ have the same number of handles of each of the four types:
\begin{enumerate}
    \item inner, index $1$;
    \item outer, index $1$;
    \item inner, index $2$;
    \item outer, index $2$.
\end{enumerate}
\end{scholium}

\begin{proof}
An index $(1,2)$ stabilization can be used to increase $\sharp(1)$ and $\sharp(3)$ by one each or $\sharp(2)$ and $\sharp(4)$ by one each.  The homology hypothesis implies $\sharp(1)=\sharp(3)$ and $\sharp(2)=\sharp(4)$, since these handles define presentations for $\pi_1$, or $H_1(\text{ };Q)$ of the two sides, $W$ and $Z$, of $S^4\setminus M$, respectively.  By Alexander duality, $H_1(W;Q)\cong 0\cong H_1(Z;Q)$.
\end{proof}

\section{Relation to the Goeritz group} \label{Goeritz}

We now give a description of manifolds in Heegaard position in terms of relative mapping class groups.
\noindent If $\Sigma$ bounds a HB $X$, there is a natural embedding $MCG(X)\subset MCG(\Sigma)$, where $MCG(T)$ is $\pi_0(Diff^+(T))$.

Given a genus $g$ Heegaard decomposition of $S^3$, we define the Goeritz group:
$G_g = MCG(X_g)\cap MCG(Y_g)\subset MCG(\Sigma)$, where $S^3 = X\cup_\Sigma Y$ \cite{Goeritz:1933}.
Each element $[\phi] \subset G_g$ induces a diffeomorphism $\Phi: S^3\to S^3$ with $\Phi(X) = X, \Phi(Y)=Y$,
and therefore $\Phi(\Sigma)=\Sigma$ so that $\Phi_{|\Sigma}=\phi \in Diff^+(\Sigma)$. Since $Diff^+(S^3)$ is connected \cite{Cerf}, one has that there is a path of diffeomorphisms $\Phi_t: S^3\to S^3$ such that $\Phi_0=Id_{S^3}$,
and $\Phi_1=\Phi$.
The image $\Phi_t(\Sigma)$ gives an isotopy of the Heegaard surface of $\Sigma$ in $S^3$ which begins and ends in $\Sigma$.
Thus, we may also regard $G_g$ as $\pi_1( Emb(\Sigma,S^3),\Sigma)$, where $Emb(\Sigma, S^3)$ denotes the space of embedded surfaces in  $S^3$ which are homeomorphic to $\Sigma$ (so only depends on the genus $g$).

Suppose $M\subset S^3\times \mathbb{R}\subset S^4$ is in Heegaard form.
We may assume that at level $0\in \mathbb{R}$,
$M\cap (S^3\times \{0\}) =\Sigma$ divides $S^3\cong S^3\times\{0\}$ into handlebodies $S^3= A\cup_\Sigma B$,
and induces a Heegaard splitting $M=X\cup_\Sigma Y$. For each element $[\phi]\in G_g = MCG(A)\cap MCG(B)$, we may realize $\phi: \partial X \to \partial Y$,
and create a new manifold $M'\subset S^3$ by regluing $M'=(X \sqcup Y) / \{ x \simeq \phi(x), x\in \partial X\}$.
Then the manifold $M'$ also has a Heegaard form embedding into $S^3\times \mathbb{R}$,
obtained by shifting the embedding $X\subset S^3\times [0,\infty)$ up by $1$ to $X\hookrightarrow S^3\times [1,\infty)$, keeping $Y$ in its initial position $Y \hookrightarrow S^3\times (-\infty,0]$, and then connecting these by an embedding $\Sigma\times [0,1] \hookrightarrow S^3\times [0,1]$ by  for $x\in \Sigma$,
$(x,t) \mapsto (\Phi_t(x), t)\subset S^3\times [0,1]$, where $\Phi_t$ is defined in the previous paragraph.

In fact, under this operation of changing a manifold $M$ in Heegaard position by the Goeritz group, one may obtain all manifolds in Heegaard position
from a finite collection. The point is that there are only finitely many possible handlebodies $X$ or $Y$ up to the action of $G_g$.
Suppose the handlebody $X$ has $a$ inside handles, and $b$ outside handles, where $a+b=g$. We may choose a 2-sphere $\Theta \subset S^3$
so that $\Theta \cap \Sigma =\theta$ is a circle, and $\theta$ separates $\Sigma$ into surfaces of genus $a$ and $b$ respectively. The handlebody
$X$ is then obtained from $\Sigma$ by compressing a handlebody of genus $a$ inside of $A$ cut off by $\Theta\cap A$, and a handlebody
of genus $b$ inside $B$ cut off by $\Theta\cap B$. By Waldhausen's uniqueness of the genus $g$ Heegaard splittings of $S^3$ \cite{Waldhausen68}, the intersection of $\Sigma$ with the two complementary regions of $S^3-\Theta$ are standard relative splittings of the ball. Thus,
for any other such sphere $\Theta'$ such that $\Theta'\cap \Sigma=\theta'$ cuts $\Sigma$ into surfaces of genus $a, b$, there is an
element of the Goeritz group $[\phi]\in G_g$ such that $\phi(\theta)=\theta'$. Similarly, there is a sphere $\Delta \subset S^3$
cutting $\Sigma$ into subsurfaces of genus $c,d$, where $c$ and $d$ are the number of inside and outside handles of
$Y$ respectively. Let's say $a+c\leq g$. We may then choose a Heegaard embedding of  $\#_{g-a-c} S^2\times S^1$, which
has two disjointly embedded spheres $\Theta', \Delta'$ intersecting the Heegaard splitting  $\Sigma'$ in disjoint curves $\theta'=\Theta'\cap \Sigma', \delta'=\Delta'\cap\Sigma'$. We may assume that $\Sigma'=\Sigma$, and $\Theta'=\Theta$. Then there is an element $[\phi]$ of
the Goeritz group $G_g$ modifying this embedding to the one for the manfiold $M$ as described above, by $\phi(\delta')=\delta=\Delta\cap \Sigma$. Thus, all Heegaard embeddings are obtained
by modification of a standard Heegaard embedding of $\#_{*} S^2\times S^1$ by a Goeritz element.

If the element $[\phi]$ happens to lie in $MCG(X)\cdot  MCG(Y)$, then $M'\cong M$, and we get a re-embedding of $M$ into $S^4$ in Heegaard position. If $M\cong S^3$, then we obtain another smooth Heegaard embedding of $S^3\hookrightarrow S^4$ from $MCG(A)\cap MCG(B)\cap ( MCG(X)\cdot MCG(Y))$. In fact, all
such Heegaard embeddings of $S^3$ of genus $g$ are obtained in this way by Waldhausen's uniqueness theorem for genus $g$ Heegaard
splittings of $S^3$ \cite{Waldhausen68}.
Therefore it seems like an interesting problem to understand the double coset $MCG(A)\cap MCG(B)\cap (MCG(X)\cdot  MCG(Y))$ when $S^3$ is embedded in Heegaard form as a stabilization of the standard embedding in order to understand how to construct all genus $g$ Heegaard embeddings
of $S^3$.
It would be sufficient to understand the highly symmetrical case pictured below where the Heegaard embedding has been stabilized so that $\Sigma$ becomes $genus= 2g$ and the $X ( Y)$ handlebodies are given by compression of the $2g$ loops marked by $x$�s and $y$ �s respectively (see Figure \ref{fig:3}).

It is possible to express part of this information without mentioning Heegaard embedding in the statement.

Refer to Figure \ref{fig:3} to find the $2g$ sccs on $\Sigma_{2g}$ labeled by $x$.  Notice that relative to the inside handlebody the $g$ $x$'s on the left are standard longitudes and the $g$ $x$'s on the left are standard meridians.  Let $\alpha\in MCG(\Sigma)$ take the $2g$ standard meridians to the $x$'s.  We may express the condition \emph{stably embeddable} in terms of the conjugate $G_{2g}^\alpha = \alpha^{-1} G_{2g}\alpha$ of the even genus Goeritz groups.

Some closed $3$-manifolds $M$ which do not embed in $\mathbb{R}^4$ do admit an embedding of $M\setminus\text{pt.}$ into $\mathbb{R}^4$.  An example is the Poincar\'{e} homology $3$-sphere $P= SU(2)/BI$, $BI$ the binary icosahedral group.  $P\setminus\text{pt.}$ arises as a 3D Seifert surface for the $5$-fold-twist-spun trefoil knot \cite{Zeeman65}.  Some closed $3$-manifolds, such as $RP^3$, do not admit even punctured embeddings in $\mathbb{R}^4$.  Such an embedding would yield, by restriction, an embedding of $\mathbb{RP}^2\subset\mathbb{R}^4$ with a nonzero section of its normal bundle; however, it is known that the twisted Euler class of an embedding of $\mathbb{RP}^2$ in $\mathbb{R}^4$ can only assume the values $\pm 2$ \cite{Massey69}.  (See \cite{BudneyBurton} for additional examples.) A notion intermediate between an embedding and a punctured embedding is an embedding of $M\sharp(\sharp_k S^1\times S^2)$ into $\mathbb{R}^4$, for $k\geq 0$, which we call a \emph{stable embedding} of $M$.  There do not seem to be known examples of $3$-manifolds which stably embed in $\mathbb{R}^4$ but do not embed.

\begin{theorem}\label{thm:5.1}
A closed $3$-manifold $M$ stably embeds in $\mathbb{R}^4$ if and only if it has a stabilization $M\sharp(\sharp_k S^1\times S^2)$ which admits a Heegaard decomposition of even genus $2g$ with clutching map $\sigma: \Sigma_{2g} \rightarrow \Sigma_{2g}$ lying in $G^\alpha_{2g}$.
\end{theorem}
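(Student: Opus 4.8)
The plan is to prove the two implications separately, using Theorem~\ref{Morse position} (in a form allowing several local maxima) and the Goeritz-group analysis of the present section as the two engines. Throughout I pass freely between embeddings in $\mathbb R^4$, in $S^3\times\mathbb R$, and in $S^4$, since a closed $3$-manifold is compact and may be isotoped off the two poles.

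For necessity ($\Rightarrow$), suppose $M$ stably embeds, so that $N:=M\sharp(\sharp_k S^1\times S^2)$ embeds in $\mathbb R^4\subset S^3\times\mathbb R$ for some $k$. The first step is to observe that the intervention-arc machinery of Theorem~\ref{Morse position} runs even when the embedding has several local maxima: the IAs of the lower piece $Y$ still cancel by bigons via Lemma~\ref{lem:2.2} (cancellation being organized from bottom to top), while the IAs of the upper piece $X$ that join disjoint $2$-spheres fail the hypothesis of Lemma~\ref{lem:2.2}, cannot be capped by bigons, and are instead removed by pinch-offs, each costing one $\sharp S^1\times S^2$ summand. Absorbing these finitely many summands yields a genuine Heegaard embedding of $M\sharp(\sharp_{k'}S^1\times S^2)$ for some $k'\ge k$. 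At the middle level the surface $\Sigma$ is simultaneously a Heegaard surface for $S^3_0=A\cup_\Sigma B$ and the splitting surface of $M\sharp(\sharp_{k'}S^1\times S^2)=X\cup_\Sigma Y$. A further $(1,2)$-Heegaard stabilization raises the genus to an even value $2g$ and puts the pair $(X,Y)$ into the symmetric normal form of Figure~\ref{fig:3}, where $X$ is the compression body of the $2g$ curves $x$ and $Y$ that of the $2g$ curves $y$. Reading the clutching map $\sigma:\partial X\to\partial Y$ after the change of marking $\alpha$ carrying the standard meridians to the $x$'s, the discussion preceding the theorem identifies the realizable clutchings with $G^\alpha_{2g}$, so $\sigma\in G^\alpha_{2g}$.

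For sufficiency ($\Leftarrow$), suppose $M\sharp(\sharp_k S^1\times S^2)=X\cup_\sigma Y$ is a genus-$2g$ splitting with $\sigma\in G^\alpha_{2g}$. Conjugating back by $\alpha$ turns $\sigma$ into an element $[\phi]\in G_{2g}=MCG(A)\cap MCG(B)$, a mapping class of $\Sigma$ extending over both handlebodies of the standard genus-$2g$ Heegaard splitting $S^3=A\cup_\Sigma B$. As in the explicit construction of this section, choose a path $\Phi_t:S^3\to S^3$ with $\Phi_0=\mathrm{id}$ and $\Phi_1|_\Sigma=\phi$ (possible since $\mathrm{Diff}^+(S^3)$ is connected), embed $Y\hookrightarrow S^3\times(-\infty,0]$ and $X\hookrightarrow S^3\times[1,\infty)$ in their standard Heegaard positions, and interpolate by the embedded collar $(x,t)\mapsto(\Phi_t(x),t)$ on $\Sigma\times[0,1]\hookrightarrow S^3\times[0,1]$. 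The result is a Heegaard embedding of $X\cup_\sigma Y=M\sharp(\sharp_k S^1\times S^2)$ into $S^3\times\mathbb R\supset\mathbb R^4$, which is precisely a stable embedding of $M$.

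The main obstacle is the bookkeeping in the necessity direction. The routine part is checking that, in the orientable setting, each pinch-off of an $X$-IA joining two $2$-spheres produces an honest $\sharp S^1\times S^2$ rather than an $S^1\tilde\times S^2$ or a disconnection. The substantive part is verifying that, after stabilizing to the even-genus symmetric form, the clutching map lands in $G^\alpha_{2g}$ and not in some strictly larger subgroup of $MCG(\Sigma_{2g})$: this is the assertion that the only constraint on a clutching map arising from a Heegaard embedding is the simultaneous extendability over the inside and outside $S^3$-handlebodies, which is exactly what the Waldhausen-uniqueness argument preceding the theorem supplies.
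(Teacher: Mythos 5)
Your overall skeleton agrees with the paper's: both directions reduce to producing a Heegaard embedding of some stabilization $M\sharp(\sharp_{k'}S^1\times S^2)$, then invoking the Waldhausen/Goeritz discussion preceding the theorem, and your sufficiency argument (reglue $X$ and $Y$ through the track of a path $\Phi_t$ in $\operatorname{Diff}^+(S^3)$) is exactly the paper's ``the converse is immediate.'' But your necessity step takes a genuinely different route. The paper never reopens the proof of Theorem \ref{Morse position}: it first performs \emph{ambient $0$-surgeries}, running a monotonely rising arc from each non-absolute local maximum to another point of the embedded manifold and tubing along it; each such surgery kills one local maximum at the cost of one $S^1\times S^2$ summand, and after finitely many steps the hypothesis of Theorem \ref{Morse position} (a unique local maximum) holds, so the theorem is applied as a black box. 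Your route instead re-runs the intervention-arc machinery in the presence of several maxima. That is in the spirit of the paper's own remarks (the ``tentative'' pinch-off reading of the Gantt chart, Figure \ref{fig:1}), but it is strictly more delicate than the paper's preprocessing trick, which you missed.

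Moreover, the delicate point is exactly the one you dismiss as ``routine.'' If a bad $X$-IA (ball marker at the top of its trace) is literally \emph{pinched off} --- i.e., you insert a separate index-$2$ critical point compressing the meridian of its sleeve at some level below the doomed $2$-sphere's death --- then for all levels between the pinch and the death the level surface is disconnected (main surface $\sqcup$ $2$-sphere), hence not Heegaard; worse, the extra local maximum of $e$ survives in the new embedding, so you have produced an embedding of $M\sharp(\sharp S^1\times S^2)$ that still violates both the conclusion you want and the hypothesis of Theorem \ref{Morse position}, and the argument becomes circular. The correct implementation is that the bad IA is never removed by a critical point at all: its tube persists \emph{through} the would-be maximum, so the sphere's death is converted into a critical-point-free retraction of a finger, all levels stay connected and Heegaard, and the $\sharp S^1\times S^2$ summand appears because the IA's $1$-handle is left uncanceled --- its belt sphere is capped by the structure of $e$ itself (this is also why no bigon can exist, as in the remark after Theorem \ref{Morse position} and Lemma \ref{lem:2.2}). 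With that correction your route does work, but connectivity of the levels is the entire content of ``Heegaard,'' so this step cannot be waved off.

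Two smaller points. First, your symmetrization step cites only ordinary $(1,2)$-Heegaard stabilization; to reach the symmetric form of Figure \ref{fig:3} (with $g$ inside and $g$ outside handles for each of $X$ and $Y$) the paper explicitly uses \emph{both} kinds of stabilization, including those that add further $S^1\times S^2$ factors to the manifold --- harmless for Theorem \ref{thm:5.1}, since $k$ is arbitrary, but it should be said. Second, your worry about whether the clutching map lands in $G^\alpha_{2g}$ rather than a larger subgroup is resolved exactly as you say, by the Waldhausen-uniqueness argument already in the section, so no new content is needed there.
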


\begin{proof}
Given an embedding of $M\sharp(\sharp_k S^1\times S^2)\subset \mathbb{R}^4$, it is possible to perform ambient $0$-surgeries to obtain a new embedding of $M\sharp(\sharp_{k^\prime} S^1\times S^2)\subset \mathbb{R}^4$, $k^\prime\geq k$, with a unique local maximum.  The idea is that from any local maximum of height smaller than the absolute maximum, issue a monotonely rising arc connecting that local maximum to another point on the embedding.  Then an ambient $0$-surgery guided along this arc reduces the number of local maxima (of height less than the absolute maximum) by one.  These $0$-surgeries cause additional stabilization of the embedding but eventually the hypothesis of Theorem \ref{Morse position}, a unique local maximum, will be achieved, and the embedding will be isotopic to a Heegaard embedding.  Once in Heegaard form, the preceding paragraph describes precisely how the induced Heegaard decomposition (HD) is related to Goeritz groups.  If the HD is further stabilized (using both types of stabilization: those that add an $S^1\times S^2$ factor and those that do not), we can reduce to the symmetrical situation where $\Sigma$ arises from $g$ inside and $g$ outside handles from both above ($X$) and below ($Y$).  In this symmetrical case, the clutching map belongs to $G^\alpha_{2g}$.  The converse is immediate.
\end{proof}

\begin{figure}[hbpt]
    \includegraphics{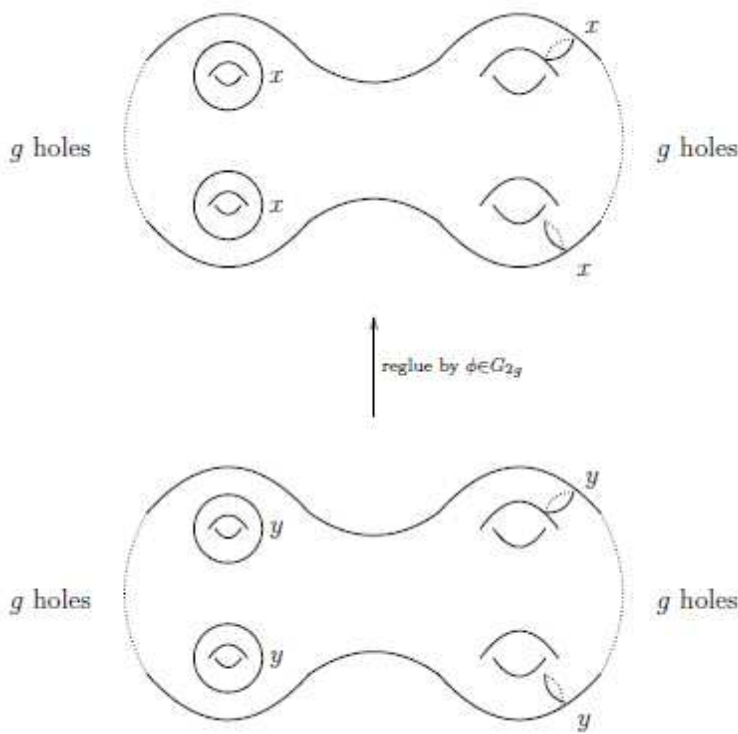}
    \caption{$M = X\cup Y$ has clutching map in a conjugate, $G_{2g}^\alpha$, of the Goeritz group $G_{2g}$.}
    \label{fig:3}
\end{figure}


\bibliography{Simplifying 3-Manifolds in R4 ARXIV_FRIENDLY.bbl}
\end{document}